\newtheorem{theorem}{Theorem}
\newtheorem{lemma}[theorem]{Lemma}
\theoremstyle{definition}
\newtheorem{definition}[theorem]{Definition}
\theoremstyle{remark}
\begin{document}
\begin{center}
{\large On the Covering Number of $U_3(q)$}\\
Michael Epstein
\end{center}

\let\thefootnote\relax\footnote{The contents of this article are based on a portion of the author's dissertation \cite{Dissertation}, submitted in partial fulfillment of the requirements for the degree of doctor of philosophy at Florida Atlantic University.}

\begin{abstract}
The \emph{covering number}, $\sigma(G)$, of a finite, noncyclic group $G$ is the least positive integer $n$ such that $G$ is the union of $n$ proper subgroups. Here we investigate the covering numbers of the projective special unitary groups $U_3(q)$, give upper and lower bounds for $\sigma(U_3(q))$ when $q \geq 7$, and show that $\sigma(U_3(q))$ is asymptotic to $q^6/3$ as $q \rightarrow \infty$.
\end{abstract}

\section{Introduction}

A collection $\mathcal{C}$ of proper subgroups of a group $G$ such that $G  = \bigcup \mathcal{C}$ is called a \emph{cover} of $G$. Any group with a finite, noncyclic homomorphic image admits a finite cover. The \emph{covering number}, $\sigma(G)$, of a group $G$ which admits a finite cover is defined to be the least positive integer $n$ such that $G$ has a cover consisting of $n$ subgroups, and a cover of this size is called a \emph{minimal cover} of $G$. Following \cite{Cohn1994}, we adopt the convention that $\sigma(G) = \infty$ for any group $G$ which does not admit a finite cover.

A number of results on covering numbers of finite groups can be found in \cite{Cohn1994}, including results on the covering numbers of finite nilpotent and supersolvable groups. In the same paper it is conjectured that the covering number of a finite, noncyclic, solvable group is one more than the order of a suitable chief factor. This conjecture was proven by Tomkinson in \cite{Tomkinson1997}. Specifically he proved:

\begin{theorem}[Tomkinson] \label{thm:solvable}
If $G$ is a finite, noncylic, solvable group, then $\sigma(G) = q+1$, where $q$ is the order of the smallest chief factor of $G$ with more than one complement.
\end{theorem}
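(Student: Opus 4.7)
My plan is to establish the equality by proving the upper bound $\sigma(G) \leq q+1$ and the lower bound $\sigma(G) \geq q+1$ separately.

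For the upper bound, I would exploit the standard inequality $\sigma(G) \leq \sigma(G/N)$ for normal subgroups $N$ (since covers pull back through quotients), which allows me to pass to $\bar{G} := G/K$. In $\bar{G}$ the image $\bar{H} := H/K$ is a normal elementary abelian $p$-subgroup of order $q$ that by hypothesis has more than one complement. Fixing a complement $\bar{L}_0$ so that $\bar{G} = \bar{H} \rtimes \bar{L}_0$, the cover I would build consists of $\bar{H}$ together with $q$ suitably chosen complements of $\bar{H}$. The complements of $\bar{H}$ in $\bar{G}$ correspond to the elements of $Z^1(\bar{L}_0, \bar{H})$, and an element $\bar{g} = \bar{m}\bar{h}$ (with $\bar{m} \in \bar{L}_0$, $\bar{h} \in \bar{H}$) lies in the complement indexed by a cocycle $\delta$ precisely when $\delta(\bar{m}) = \bar{h}$. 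The existence of more than one complement, combined with the minimality of $H/K$, should allow me to select $q$ complements whose union with $\bar{H}$ exhausts $\bar{G}$, yielding a cover of the required size.

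For the lower bound, I would reduce by induction on $|G|$ to the \emph{primitive} case $G = V \rtimes M$, where $V = H/K$ is a faithful irreducible $M$-module of order $q$. The reduction uses the minimality assumption: any smaller chief factor of $G$ has a unique complement, and quotienting by any normal subgroup strictly below $K$ preserves the hypothesis that $H/K$ is the smallest chief factor admitting multiple complements. In the primitive case, I would take an arbitrary cover $\{H_1, \ldots, H_n\}$ of $G$ and classify each $H_i$ by its intersection with $V$. Because $V$ is a minimal normal subgroup, $V \cap H_i$ is either all of $V$ or a proper subgroup severely constrained by faithful irreducibility. A counting argument comparing $|G \setminus V| = q(|M| - 1)$ with the contribution each $H_i$ can make outside $V$ then forces $n \geq q+1$.

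The main obstacle I expect is the lower bound, particularly the treatment of subgroups $H_i$ that neither contain $V$ nor are contained in a single complement of $V$. Controlling such ``mixed'' subgroups requires essential use of the faithful, irreducible action of $M$ on $V$, and one must rule out configurations in which a small number of these could cover $G$ more efficiently than complements would. The reduction to the primitive case is also delicate, since the induction on the chief series must preserve the minimality hypothesis on $H/K$ in each reduced group, and this bookkeeping is where I anticipate Tomkinson's original proof invests most of its effort.
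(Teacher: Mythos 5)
First, a point of order: the paper does not prove this statement. Theorem~\ref{thm:solvable} is Tomkinson's theorem, quoted verbatim with a citation to \cite{Tomkinson1997}, so there is no in-paper argument to compare yours against; I can only assess your sketch on its own terms.

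Your upper bound contains a genuine gap: the union of $\bar{H}=H/K$ with the complements of $\bar{H}$ in $\bar{G}=G/K$ need not be all of $\bar{G}$, no matter how the complements are chosen. Take $G$ to be the dicyclic group of order $12$, $G=\langle a,b \mid a^{6}=1,\ b^{2}=a^{3},\ b^{-1}ab=a^{-1}\rangle$, with chief factor $H/K=\langle a^{2}\rangle/1\cong C_{3}$. This factor has exactly three complements (the three cyclic Sylow $2$-subgroups), so the hypothesis of more than one complement holds with $q=3$; yet the element $a$ of order $6$ lies in none of these complements and not in $\langle a^{2}\rangle$, so the four subgroups you propose do not cover $G$. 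In your cocycle language, $\bar{m}\bar{h}$ lies in some complement only if $\bar{h}$ is in the image of the evaluation map $Z^{1}(\bar{L}_{0},\bar{H})\to\bar{H}$ at $\bar{m}$, and that image can be a proper subgroup of $\bar{H}$ when $\bar{m}$ has nontrivial fixed points on $\bar{H}$ --- so the sentence ``the existence of more than one complement \dots should allow me to select $q$ complements whose union with $\bar{H}$ exhausts $\bar{G}$'' is precisely the step that fails. The repair (and what Tomkinson actually does) is to cover the elements acting with fixed points by a subgroup containing $C_{G}(H/K)$, not by $H/K$ itself; in the example above one replaces $\langle a^{2}\rangle$ by $C_{G}(H/K)=\langle a\rangle$. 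Your lower-bound outline points in a reasonable direction (reduction to a primitive quotient plus a counting argument), but as you yourself note, the handling of subgroups meeting $V$ properly and the verification that the reduction preserves the minimality hypothesis on $H/K$ are exactly where the substance of the proof lies, and neither is carried out.
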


In light of this result, attention has shifted to investigating the covering numbers of nonsolvable groups, particularly simple and almost simple groups. Many results can be found in \cite{Britnell2008, Britnell2011, Bryce1999, EMNP17, Holmes2006, Holmes2010, Kappe2010, Kappe2016, Maroti2005}. In this work, we consider the covering numbers of projective special unitary groups. In light of the fact that $U_2(q)$ is known to be isomorphic to the projective special linear group $L_2(q)$, it follows from the results in \cite{Bryce1999} that $\sigma(U_2(q)) = \frac{1}{2}q(q+1)$ if $q$ is even and $\sigma(U_2(q)) = \frac{1}{2}q(q+1)+1$ if $q$ is odd, with a few exceptions for small $q$ (which are handled individually in the same paper). Consequently, we will consider the covering number of $U_3(q)$. The covering number of $U_3(q)$ is known for $q\leq 5$: $U_3(2) \cong 3^2:Q_8$ has the Klein 4-group as a homomorphic image, and so has covering number 3, and the covering numbers of $U_3(3)$, $U_3(4)$, and $U_3(5)$ were determined to be 64, 1745, and 176 respectively in \cite{Garonzi2019} (and independently in \cite{Dissertation}). In this article we investigate the covering number of $U_3(q)$ for $q \geq 7$. The main result of this paper is the following theorem:

\begin{theorem} \label{thm:mainresult}
Let $q \geq 7$ be a prime power. Then
\[k(q)+q^3(q+1)^2(q-1)/3 \leq \sigma(U_3(q)) \leq q^4+q^2+1 - m(q) + q^3(q+1)^2(q-1)/3\]
where 
\[ k(q)  = \begin{cases} 
      1, & \text{if $q$ is a power of 3} \\
      1+q^3, & \text{otherwise},
   \end{cases}
\]
and
\[ m(q)  = \begin{cases} 
      q^4/2, & \text{if $q$ is a power of 2} \\
      q^3+2q^2-2q-1, & \text{if $q$ is a power of 3} \\
      nq^4/p, & \text{if $q$ is a power of a prime $p$ such that $p = 3n \pm 1 \geq 5$}. 
   \end{cases}
\]
In particular, 
\[1+q^3(q+1)^2(q-1)/3 \leq \sigma(U_3(q))\leq q^4-q^3-q^2+2q+2 +q^3(q+1)^2(q-1)/3\,.\]

\end{theorem}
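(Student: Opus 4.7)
The plan is to bound $\sigma(U_3(q))$ from both sides by using the classification of the maximal subgroups of $U_3(q)$ due to Mitchell (as recast in modern language by Hartley and others). These fall into: parabolic point stabilizers $P_1$ (there are $q^3+1$, one per isotropic $1$-space); stabilizers $P_2$ of non-degenerate $1$-spaces (there are $q^4-q^3+q^2$); normalizers $N_S$ of Singer tori cyclic of order $(q^2-q+1)/\gcd(3,q+1)$; stabilizers of orthonormal frames; subfield subgroups; and a few small exceptional families. A direct order computation shows that $U_3(q)$ contains exactly $q^3(q+1)^2(q-1)/3$ Singer normalizers, and this count will account for the dominant term in both bounds.

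For the lower bound, I would invoke Zsigmondy to produce, for $q \geq 7$, a primitive prime divisor $r$ of $q^6-1$ with $r \mid q^2-q+1$. An element of order $r$ generates a Singer torus and lies in no maximal subgroup of $U_3(q)$ other than the normalizer of that torus, so every cover must contain every one of the $q^3(q+1)^2(q-1)/3$ Singer normalizers. To pick up the extra $k(q)$ summand I would locate unipotent elements missed by every Singer normalizer. When $q$ is not a power of $3$, the transvections of $U_3(q)$ are confined to the parabolic subgroups $P_1$ and each fixes a unique isotropic point; a pigeonhole argument on their distribution among the $q^3+1$ parabolics forces at least $1+q^3$ further subgroups. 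When $q$ is a power of $3$ the exponent of the Sylow $3$-subgroup collapses, the unipotent classes merge, and only one extra subgroup is forced.

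For the upper bound, I would construct an explicit cover consisting of all Singer normalizers together with all $q^3+1$ parabolics and all $q^4-q^3+q^2$ non-degenerate point stabilizers, and then prune $m(q)$ of these point stabilizers. The pruning is possible because the two families of point stabilizers overlap substantially on the non-identity conjugacy classes of $U_3(q)$: most semisimple regular elements lie in several $P_2$'s, long-root unipotents lie in several $P_1$'s, and in certain characteristics the sub-line stabilizers inside a single $P_2$ simultaneously absorb elements from many point stabilizers. Verifying that the pruned collection still meets every semisimple and every unipotent conjugacy class of $U_3(q)$ completes the upper bound.

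The main obstacle is the exact determination of $k(q)$ and $m(q)$ in each characteristic regime. The exponent of the Sylow $p$-subgroup of $U_3(q)$ is $4$ when $p=2$ but $p$ otherwise, so the number of transvections, the number of unipotent conjugacy classes, and their distribution among the parabolics differ in the three cases $p=2$, $p=3$, and $p \geq 5$; moreover $\gcd(3,q+1)$ depends on $p \bmod 3$ and affects the indices of several maximal subgroups. The technical core of the proof is therefore the enumeration, in each regime, of precisely which point stabilizers can be dropped for the upper bound and which further subgroups are forced for the lower bound, and this is where the formulas defining $m(q)$ and $k(q)$ will actually come from.
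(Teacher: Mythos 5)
Your overall skeleton --- force all $q^3(q+1)^2(q-1)/3$ Singer normalizers (equivalently, stabilizers of imaginary triangles) into every cover via elements of order dividing $q^2-q+1$, then add point stabilizers and count unipotent elements for the residual terms --- matches the paper's strategy, but both halves have gaps where the real content lies. For the lower bound, your claim that the transvections are ``confined to the parabolic subgroups $P_1$'' is false: an elation fixes its axis pointwise, so it lies in the stabilizer of every nonabsolute point on that axis (each nonabsolute point stabilizer contains exactly $q^2-1$ elations), and $p$-elements also occur in subfield subgroups, conic stabilizers, and the small exceptional subgroups. A pigeonhole among the parabolics alone therefore does not close. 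The paper instead takes $\Omega$ to be all $q^6-1$ nontrivial elements of Sylow $p$-subgroups and proves, family by family, that every maximal subgroup other than an absolute-point stabilizer contains strictly fewer than $q^3-1$ elements of $\Omega$, whence the covering condition forces at least $q^3+1$ subgroups beyond the Singer normalizers. This argument is run only for $p\neq 3$; the reason $k(q)=1$ in characteristic $3$ is not a ``merging of unipotent classes'' but that the Singer normalizers, of order $3(q^2-q+1)/\gcd(3,q+1)$, themselves contain elements of $\Omega$ when $p=3$, which ruins the count.

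The more serious gap is in the upper bound. The mechanism that permits dropping $m(q)$ nonabsolute point stabilizers is not a vague overlap of conjugacy classes: the paper proves that every element fixing a nonabsolute but no absolute point of $PG(2,q^2)$ fixes exactly the three vertices of a self-polar triangle, so a set $S$ of nonabsolute points whose stabilizers may all be omitted is precisely a set containing no self-polar triangle, i.e.\ a set inducing a triangle-free subgraph of the unitary polarity graph. The three-case formula for $m(q)$ is then quoted directly from Mattheus and Pavese's theorem on maximum triangle-free induced subgraphs of that graph; it is not something one recovers by enumerating which conjugacy classes the pruned collection still meets. Without the self-polar-triangle characterization and the reduction to the polarity-graph extremal problem, your pruning step cannot be carried out and the specific values of $m(q)$ have no derivation.
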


An immediate consequence of this is that $\sigma(U_3(q)) \sim \dfrac{q^6}{3}$ as $q \rightarrow \infty$. We also prove:
\begin{theorem} \label{thm:mainresult2}
$\sigma(SU_n(q)) = \sigma(U_n(q))$.
\end{theorem}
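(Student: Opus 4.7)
The plan is to write $G = SU_n(q)$ and $Z = Z(G)$, so that $U_n(q) = G/Z$, with natural projection $\pi : G \to U_n(q)$. The inequality $\sigma(G) \leq \sigma(U_n(q))$ is immediate: pulling back a minimal cover of $U_n(q)$ under $\pi$ yields a cover of $G$ of the same size by proper subgroups, since the preimage of any proper subgroup under a surjection is proper.

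For the reverse inequality, my plan is to reduce everything to the structural claim that every maximal subgroup of $G$ contains $Z$. Given this claim, starting from any minimal cover of $G$, I would enlarge each of its members to a maximal subgroup of $G$ containing it (possibly introducing repetitions), and then quotient by $Z$. The resulting subgroups are proper in $U_n(q)$ and still cover $U_n(q)$, and their number is at most $\sigma(G)$, giving $\sigma(U_n(q)) \leq \sigma(G)$.

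To establish the claim, I would argue by contradiction. If $M \leq G$ is a maximal subgroup with $Z \not\leq M$, then maximality forces $MZ = G$, so
\[
  [G:M] \;=\; [MZ:M] \;=\; [Z : Z \cap M] \ \text{divides}\ |Z| = \gcd(n, q+1) \leq n.
\]
The action of $G$ on the cosets $G/M$ provides a homomorphism $G \to S_{[G:M]}$ whose kernel $K$ is the core of $M$ in $G$, a proper normal subgroup of $G$. Since $G$ is quasi-simple in all but a handful of small cases, $K$ must lie in $Z$, and the induced map then embeds the nonabelian simple group $U_n(q) = G/Z$ into $S_{[G:M]} \leq S_n$. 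Because $|U_n(q)| > n!$ for all $n \geq 2$ and all prime powers $q \geq 2$, this is a contradiction.

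The main obstacle I anticipate is the few small pairs $(n,q)$ for which either $G$ is not quasi-simple (notably $(2,2)$ and $(2,3)$) or $U_n(q)$ is not simple (notably $(3,2)$). For each of these I would bypass the quasi-simple argument by inspecting the (short) list of maximal subgroups of $SU_n(q)$ directly and checking that each one contains $Z$.
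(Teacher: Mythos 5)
Your proposal is correct, and its skeleton matches the paper's: both arguments reduce the theorem to the claim that every maximal subgroup of $G = SU_n(q)$ contains $Z = Z(G)$, from which $\sigma(G) = \sigma(G/Z)$ follows by passing covers back and forth along the quotient map. The difference lies in how that claim is proved. The paper uses a soft, completely general observation (Lemma \ref{lem: center or commutator}): a maximal subgroup of any group contains either the center or the commutator subgroup, because $Z \not\leq H$ forces $H \trianglelefteq HZ = G$ with abelian quotient; since $SU_n(q)$ is perfect outside the three exceptional cases, no maximal subgroup contains the commutator subgroup, so all contain $Z$. Your route instead bounds $[G:M] = [Z : Z\cap M] \leq \gcd(n,q+1) \leq n$ and derives a contradiction from the coset action, using quasi-simplicity to force the core into $Z$ and then comparing $\lvert U_n(q)\rvert$ with $n!$. (One small imprecision: what embeds into $S_{[G:M]}$ is $G/\mathrm{core}_G(M)$, not $U_n(q)$ itself; but since the core lies in $Z$, the order inequality $\lvert U_n(q)\rvert \leq \lvert G/\mathrm{core}_G(M)\rvert \leq n!$ still yields the contradiction.) The paper's lemma is more economical — no order estimates, no appeal to simplicity of $U_n(q)$, and it generalizes immediately to all perfect groups (the paper reuses it for $Sp_{2m}(q)$). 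The exceptional cases are also treated differently: the paper sidesteps the structural claim for $(n,q) \in \{(2,2),(2,3),(3,2)\}$ by directly identifying both covering numbers ($S_3$; $SL_2(3)$ versus $L_2(3)$ via \cite{Bryce1999}; the Klein four-group quotient of both $SU_3(2)$ and $U_3(2)$), whereas you propose to verify the maximal-subgroup claim by inspection. That is viable ($Z$ is trivial for $(2,2)$; $SL_2(3)$ has a unique involution $-I$; and for $SU_3(2) \cong 3^{1+2}{:}Q_8$ any maximal $M$ with $MZ = G$ would meet the normal Sylow $3$-subgroup in an index-$3$, hence normal, subgroup containing its center), but it is more work than the paper's citations.
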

In particular, this shows that the bounds given in Theorem \ref{thm:mainresult} for $\sigma(U_3(q))$ are valid for $\sigma(SU_3(q))$ as well.

\section{Preliminaries}
We assume the reader is familiar with basic group theory. Throughout this article we use the notation and terminology of \cite{DandF}. We also use the notation of \cite{Atlas1985} for simple groups and subgroup structures. The reader is assumed to have some understanding of projective geometry over finite fields. We refer the reader to \cite{Hirschfeld1998} as a reference, but recall a few basic facts here. Let $q$ be a prime power. We denote the field of order $q$ by $\mathbb{F}_q$, and write $\mathbb{F}_q^*$ for the set of nonzero elements of $\mathbb{F}_q$. If $n \geq 2$ is a positive integer, we denote the point of $PG(n-1,q)$ which corresponds to  a nonzero vector $x \in \mathbb{F}_q^n$ by $[x]$. If $S\subseteq \mathbb{F}_q^n$, we write $[S] = \{[x] : x \in S\setminus \{0\} \}$.

Every invertible linear map $T: \mathbb{F}_{q}^n \rightarrow \mathbb{F}_{q}^n$ induces a collineation $\tau$ of $PG(n-1, q)$, given by $\tau([x]) = [T(x)]$. The map which sends each invertible linear transformation to its induced collineation is a homomorphism from the general linear group $GL_n(q)$ into the automorphism group of $PG(n-1,q)$ whose kernel is the center of $GL_n(q)$, which consists of the scalar transformations in $GL_n(q)$.

For $\alpha \in \mathbb{F}_{q^2}$, let $\overline{\alpha} = \alpha^q$. If $n$ is a positive integer, a \emph{conjugate symmetric sesquilinear form} on $\mathbb{F}_{q^2}^n$ is a function $f: \mathbb{F}_{q^2}^n \times \mathbb{F}_{q^2}^n \rightarrow \mathbb{F}_{q^2}$, such that for all $x, y, z \in \mathbb{F}_{q^2}^n$ and $\alpha, \beta \in \mathbb{F}_{q^2}$, $f(\alpha x + \beta y, z) = \alpha f(x,z) +\beta f(y,z)$ and $f(y,x) = \overline{f(x,y)}$. We say that the vectors $x$ and $y$ are \emph{orthogonal} if $f(x,y) = 0$. For $S \subseteq \mathbb{F}_{q^2}^n$, we define $S^\perp = \{x \in \mathbb{F}_{q^2}^n : f(x,s) = 0 \text{ for all $s \in S$}\}$. We note that this is a subspace of $\mathbb{F}_{q^2}^n$ for any subset $S$. The form $f$ is \emph{degenerate} if there is a nonzero vector $x$ such $\{x\}^\perp = \mathbb{F}_{q^2}^n$. We call a vector $x$ \emph{isotropic} if $x \in \{x\}^\perp$.

It is known that for a nondegenerate congujate symmetric sesquilinear form $f$ on $\mathbb{F}_{q^2}^n$ there is an \emph{orthonormal basis} $\{b_1, \dots, b_n\}$ for $\mathbb{F}_{q^2}^n$ such that for all $1 \leq i,\ j\leq n$, $f(b_i,b_i) = 1$ and $f(b_i, b_j) = 0$ if $i \neq j$. In this basis, for all $\alpha_i, \beta_i \in \mathbb{F}_{q^2}^n$, $1 \leq i \leq n$,
\[f\left(\sum_{i=1}^n \alpha_i b_i ,\sum_{i=1}^n \beta_i b_i\right) = \sum_{i=1}^n \alpha_i \overline{\beta_i}.\]
Conversely, given any basis $\{b_1, \dots, b_n\}$, this formula defines a conjugate symmetric sesquilinear form on $\mathbb{F}_{q^2}^n$. One consequence of this is that any two conjugate symmetric sesquilinear forms on $\mathbb{F}_{q^2}^n$ are equivalent, differing only by a change of basis.

An \emph{isometry} of $f$ is an invertible linear map $T \in GL_n(q^2)$ such that $f(T(x), T(y)) = f(x,y)$ for all $x, y \in \mathbb{F}_{q^2}^n$. Under composition these form a group called the \emph{general unitary group} $GU_n(q)$. The \emph{special unitary group} $SU_n(q)$ is the normal subgroup consisting of the isometries with determinant 1. We may then define the \emph{projective general unitary group} $PGU_n(q)$ and the \emph{projective special unitary group} $U_n(q)$ (often denoted by $PSU_n(q)$ or $PSU_n(q^2)$) as the images of $GU_n(q)$ and $SU_n(q)$ under the homomorphism described above. The group $U_n(q)$ has order $\displaystyle \frac{q^{\frac{1}{2}n(n-1)}}{\gcd(n,q+1)} \prod_{i=2}^n(q^i-(-1)^i)$ (which is $q^3(q^3+1)(q^2-1)/\gcd(3,q+1)$ for $n=3$) and is known to be simple except when $(n,q) \in \{(2,2), (2,3), (3,2)\}$.

A \emph{polarity} of a projective geometry is an inclusion reversing permutation of the subspaces which has order two. A nondegenerate conjugate symmetric sesquilinear form $f$ on $\mathbb{F}_{q^2}^n$ gives rise a polarity $\perp$ of $PG(n-1, q^2)$, given by $[W]^\perp = [W^\perp]$ for any subspace $W$ of $\mathbb{F}_{q^2}^n$. We call the polarity induced in this manner by a conjugate symmetric sesquilinear form a \emph{unitary polarity}. A point $P$ of the projective space is said to be an \emph{absolute point} of the polarity $\perp$ if $P \in P^\perp$. It is easily seen that if $\perp$ is the unitary polarity corresponding to the form $f$, then a point $P = [x]$ of $PG(n-1,q^2)$ is an absolute point of $\perp$ if and only $x$ is a nonzero isotropic vector with respect to the form $f$. In particular, $PG(2, q^2)$ has $q^3+1$ absolute points of the unitary polarity $\perp$, and any line $\ell$ of $PG(2,q^2)$ has 1 or $q+1$ absolute points, according to whether $\ell^\perp$ is an absolute point or not. We note that the action of $U_3(q)$ on the absolute points of $\perp$ in $PG(2,q^2)$ is doubly transitive, and the action on the $q^2(q^2-q+1)$ nonabsolute points is transitive. We say a triangle $\Delta$ in a projectve plane is \emph{self-polar} with respect to a polarity $\perp$ if the polarity interchanges each vertex of the triangle with its opposite side. Note that the vertices of such a triangle are necessarily nonabsolute.

For the remainder of this article we assume that $p$ is a prime, $a$ is a positive integer, $q=p^a$, $f$ is a conjugate symmetric sesquilinear form on $\mathbb{F}_{q^2}^3$, and $\perp$ is the corresponding polarity on $PG(2,q^2)$.

\section{Proof of Theorem \ref{thm:mainresult}}
We note that for the purposes of determining a covering number of a finite group one need only consider covers consisting of maximal subgroups, and therefore we first consider the maximal subgroups of $U_3(q)$. The subgroup structure of $U_3(q)$ was investigated by  H. H. Mitchell \cite{Mitchell} and R. W. Hartley \cite{Hartley} for $q$ odd or even respectively. They prove:

\begin{theorem}[Mitchell] \label{thm:Mitchell}
Let $H$ be a subgroup of $U_3(q)$, $q$ odd. Then, $H$ is a subgroup of the stabilizer of a point and a line, a triangle, or an imaginary triangle, (items 1--4 below) or $H$ is one of the following subgroups:
\begin{enumerate}
\item the stabilizer of an absolute point, with order $\dfrac{q^3(q+1)(q-1)}{\text{gcd}(3,q+1)}$.
\item the stabilizer of a nonabsolute point, with order $\dfrac{q(q+1)^2(q-1)}{\text{gcd}(3,q+1)}$.
\item the stabilizer of a triangle (in $PG(2, q^2)$), with order $\dfrac{6(q+1)^2}{\text{gcd}(3,q+1)}$.
\item the stabilizer of an imaginary triangle (i.e. a triangle in $PG(2, q^6)$ but not $PG(2, q^2)$), with order $\dfrac{3(q^2 - q+1)}{\text{gcd}(3,q+1)}$.
\item the stabilizer of a conic, with order $q(q+1)(q-1)$.
\item $U_3(q_0)$, if $q = q_0^k$ with $k$ odd.
\item $PGU_3(q_0)$, if $q = q_0^k$, $k$ is odd, and 3 divides both $k$ and $q_0 +1$.
\item the Hessian groups of order 216 (if 9 divides $q+1$), 72 and 36 (if 3 divides $q+1$).
\item a group of order 168, if $-7$ is not a square in $\mathbb{F}_q$.
\item a group of order 360, if 5 is a square in $\mathbb{F}_q$ and $\mathbb{F}_q$ does not contain a primitive third root of unity.
\item a group of order 720, if $q = 5^k$ with $k$ odd.
\item a group of order 2520, if $q=5^k$ with $k$ odd.
\end{enumerate}
\end{theorem}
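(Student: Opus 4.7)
The plan is to follow Mitchell's geometric strategy: classify $H \le U_3(q)$ according to which projective configurations in $PG(2,q^2)$ it stabilizes, exploiting the fact that the polarity $\perp$ is preserved by $U_3(q)$. First I would handle the \emph{reducible} cases. If $H$ fixes a point $P$, then $H$ also fixes the polar line $P^\perp$, so $H$ lies in the stabilizer of an absolute or a nonabsolute point, giving items 1 and 2. By polarity, any $H$ fixing a line is already covered. If $H$ fixes no point but does stabilize a set of three noncollinear points, then either the triangle is defined over $\mathbb{F}_{q^2}$ (item 3) or its vertices form a single Galois orbit of size 3 over a cubic extension (item 4, imaginary triangle).

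The substantial work is the \emph{irreducible} case, where $H$ stabilizes no point, line, or (possibly imaginary) triangle. I would split this according to whether the characteristic $p$ divides $|H|$. If $p \mid |H|$, then any nontrivial $p$-element $u \in H$ has $1$ as its only eigenvalue on $\mathbb{F}_{q^2}^3$ and hence fixes some absolute point; a Sylow-theoretic analysis (looking at $N_{U_3(q)}(\langle u \rangle)$ and the orbit of $H$ on absolute points) then forces $H$ either back into a reducible case or up to one of the large ``geometric'' overgroups: the conic stabilizer $PGL_2(q)$ of item 5, or a subfield subgroup $U_3(q_0)$ or $PGU_3(q_0)$ of items 6--7 detected by fixing a subplane.

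If $p \nmid |H|$, then $H$ is a completely reducible $p'$-subgroup of $GU_3(q)/Z$. If $H$ preserves a direct sum decomposition of $\mathbb{F}_{q^2}^3$ into proper subspaces, one recovers the stabilizer of a point (reducible case) or of a triangle (monomial case). Otherwise $H$ acts primitively, so every abelian normal subgroup of $H$ is cyclic and central; this restricts the structure of $H$ severely. From here I would invoke the Dickson-style classification of finite primitive subgroups of $PGL_3(\mathbb{C})$ and identify which of these actually embed in $U_3(q)$ by comparing orders to those of the maximal tori $q^2-1$, $q^2-q+1$ and using the congruence conditions on $q$ that make the required roots of unity, square roots, and cube roots of $-7$ and $5$ lie in $\mathbb{F}_q$. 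This yields the Hessian groups and the groups of orders $168$, $360$, $720$, and $2520$ in items 8--12.

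The main obstacle is precisely this last primitive $p'$-step: one must certify both that no other finite primitive group embeds, and that the listed ones \emph{do} embed under exactly the stated arithmetic conditions on $q$. This is delicate because the existence conditions (for instance, that $-7$ be a nonsquare, or that $\mathbb{F}_q$ lack a primitive cube root of unity) come from requiring that specific characteristic polynomials split appropriately, and each exceptional family has to be constructed explicitly and checked against the isometry condition defining $U_3(q)$.
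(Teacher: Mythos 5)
You should first be aware that the paper itself gives no proof of this statement: it is quoted verbatim as Mitchell's 1911 classification (with Hartley covering even $q$ and King's survey as a modern reference), so there is no internal argument to compare against. Judged as a standalone proof, your proposal is a reasonable reconstruction of the classical strategy --- reducible versus irreducible, then imprimitive versus primitive, with the primitive $p'$-case handled by lifting to characteristic zero --- but it is a program rather than a proof. The two places where all of the actual content of the theorem lives are exactly the two places you wave at. In the irreducible case with $p \mid |H|$, the assertion that a ``Sylow-theoretic analysis'' forces $H$ into a conic stabilizer or a subfield subgroup is the heart of Mitchell's argument and cannot be taken for granted; one must rule out, for instance, irreducible subgroups generated by few unipotents that are not of either form, and one must derive the precise conditions in items 6--7 (in particular why $PGU_3(q_0)$ occurs only when $3$ divides both $k$ and $q_0+1$). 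In the primitive $p'$-case, invoking the Blichfeldt/Mitchell classification of finite primitive subgroups of $PGL_3(\mathbb{C})$ is the right idea, but transferring it to characteristic $p$ and then deciding, for each candidate group, exactly which congruence conditions on $q$ permit an embedding into $U_3(q)$ rather than merely into $PGL_3(q^2)$ (e.g.\ why $L_3(2)$ requires $-7$ to be a nonsquare in $\mathbb{F}_q$, and why $A_6.2$ and $A_7$ require $q=5^k$ with $k$ odd) is a substantial computation with character fields and invariant Hermitian forms that you acknowledge but do not carry out.

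There is also a small logical slip at the start: from ``$H$ fixes a point $P$'' you conclude $H$ lies in the stabilizer of a point, which is fine, but the theorem's items 1--2 concern the stabilizer of a point \emph{and} a line as the ambient reducible case, and the triangle cases (items 3--4) arise from subgroups that fix no point or line individually but permute a triple; your sketch conflates ``stabilizes a set of three noncollinear points'' with the precise dichotomy between a triangle of $PG(2,q^2)$ and a Galois-conjugate triangle in $PG(2,q^6)$, which for a unitary group must additionally be shown to be self-polar (as the paper proves separately in its Theorem 9 for the elements it cares about). None of this makes your outline wrong in direction, but as it stands the proposal establishes essentially none of the twelve items; for the purposes of this paper the correct move is to cite Mitchell (or King) rather than to reprove the classification.
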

As noted by King in \cite{King}, the groups of orders 168, 360, 720, and 2520 are respectively isomorphic to $L_3(2)$, $A_6$, $A_6.2$, and $A_7$. If $q$ is even, the maximal subgroups of $U_3(q)$ are as follows:

\begin{theorem}[Hartley] \label{thm:Hartley}
Let $q$ be even and $H$ be a maximal subgroup of $U_3(q)$. Then $H$ is one of the following subgroups:
\begin{enumerate}
\item the stabilizer of an absolute point, with order $\dfrac{q^3(q+1)(q-1)}{\text{gcd}(3,q+1)}$.
\item the stabilizer of a nonabsolute point, with order $\dfrac{q(q+1)^2(q-1)}{\text{gcd}(3,q+1)}$.
\item the stabilizer of a triangle (in $PG(2, q^2)$), with order $\dfrac{6(q+1)^2}{\text{gcd}(3,q+1)}$.
\item the stabilizer of an imaginary triangle, with order $\dfrac{3(q^2 - q+1)}{\text{gcd}(3,q+1)}$.
\item $U_3(q_0)$, if $q = q_0^k$, with $k$ an odd prime.
\item $PGU_3(q_0)$, if $q= q_0^3 = 2^{3k}$ with $k$ odd.
\item a group of order 36, if $q = 2$.
\end{enumerate}
\end{theorem}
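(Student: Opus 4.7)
The statement is Hartley's even-characteristic analogue of Mitchell's Theorem~\ref{thm:Mitchell}. My approach would be the standard one for classifying maximal subgroups of a small-rank classical group: apply an Aschbacher-style dichotomy to the natural module $V=\mathbb{F}_{q^2}^3$. A maximal subgroup $M\leq U_3(q)$ either (a) preserves a nontrivial geometric structure on $V$, giving items 1--4; (b) descends from a subfield, giving items 5--6; or (c) is an irreducible almost simple subgroup that must be excluded by a representation-theoretic argument (except for the exceptional $q=2$).

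If $M$ stabilizes a nonzero proper subspace $W\leq V$, the polarity $\perp$ pairs $\dim W$ with $3-\dim W$, so we may take $\dim W=1$. Then $[W]$ is either absolute or nonabsolute; since $U_3(q)$ is transitive on each type (with $q^3+1$ absolute and $q^2(q^2-q+1)$ nonabsolute points), orbit-stabilizer gives items 1 and 2. If $M$ preserves no line but does preserve an orthogonal decomposition $V=L_1\oplus L_2\oplus L_3$, the points $[L_i]$ form a self-polar triangle, giving item 3. If $M$ instead stabilizes an $\mathbb{F}_{q^6}$-structure on $V$ permuted cyclically by $\mathrm{Gal}(\mathbb{F}_{q^6}/\mathbb{F}_{q^2})$, the Galois orbit of a generic point is an imaginary triangle, yielding item 4.

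For the subfield families, write $q=q_0^k$ with $k$ an odd prime; the form $f$ can be written over $\mathbb{F}_{q_0^2}$, embedding $U_3(q_0)$ in $U_3(q)$. The $PGU_3(q_0)$ case arises as an overgroup of this $U_3(q_0)$ exactly when the divisibility condition $3\mid q_0+1$ is satisfied and the cubic twist is available, forcing $q=q_0^3=2^{3k}$ with $k$ odd. The exceptional item 7 appears only for $q=2$, where $U_3(2)\cong 3^2{:}Q_8$ is solvable; Tomkinson's Theorem~\ref{thm:solvable} already governs its structure and forces a maximal subgroup of order $36$.

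The main obstacle is (c), ruling out the exceptional almost simple subgroups from Mitchell's odd-characteristic list (conic stabilizer, Hessian groups, $L_3(2)$, $A_6$, $A_6.2$, $A_7$). The essential structural fact in even characteristic is that every involution of $U_3(q)$ is unipotent, acting as a transvection that fixes a unique absolute point, whereas the involutions of each candidate almost simple group would need to act semisimply in a faithful three-dimensional unitary representation. Combining this involution obstruction with Brauer-type bounds on minimal faithful representations in the defining characteristic $p=2$ rules out all remaining candidates, and maximality among the surviving families is then routine from the order formulas.
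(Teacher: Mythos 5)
First, a point of calibration: the paper does not prove this theorem at all. It is quoted as a classical result of Hartley \cite{Hartley} (the 1925 determination of the ternary collineation groups over $GF(2^n)$), just as Theorem~\ref{thm:Mitchell} is quoted from Mitchell, so there is no internal proof to measure your argument against. Judged on its own, your outline follows the standard modern (Aschbacher-style) route, and the treatment of the geometric classes (reducible subgroups giving items 1--2, imprimitive decompositions giving item 3, the $\mathbb{F}_{q^6}$-structure giving item 4) and of the subfield subgroups is the right shape, though it establishes only that these stabilizers are candidates, not that they are actually maximal.

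The genuine gap is in part (c), which is where all the work in Hartley's theorem lives, and the mechanism you propose for it is incorrect. In characteristic $2$, an element $x$ of order $2$ in \emph{any} subgroup of $GL_3(\mathbb{F}_{q^2})$ satisfies $(x-1)^2=x^2-1=0$ and is therefore unipotent; there is no candidate subgroup whose involutions ``would need to act semisimply,'' so the dichotomy you invoke does not exist and excludes nothing. The actual exclusions require case-by-case module-theoretic and geometric arguments of a different kind: $A_6$ and $A_7$ have no faithful $3$-dimensional representations in characteristic $2$ (their smallest nontrivial $2$-modular irreducibles have dimension $4$); the two $3$-dimensional $2$-modular irreducibles of $L_3(2)$ are the natural module and its dual, each fixed by the $q$-power Frobenius twist but not self-dual, so neither carries an invariant conjugate-symmetric form; the conic stabilizer disappears for even $q$ not by a representation bound but because a conic in even characteristic has a nucleus, so its stabilizer fixes a point and lies inside item 1 or 2; and the Hessian-type groups $3^2{:}X$ genuinely occur (item 7 for $q=2$ is one) and must be shown to be non-maximal for $q>2$ by exhibiting explicit overgroups such as subfield subgroups. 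None of this is supplied by your sketch. Two smaller corrections: Tomkinson's Theorem~\ref{thm:solvable} computes covering numbers of solvable groups and says nothing about which subgroups of $3^2{:}Q_8$ are maximal, so it cannot ``force'' item 7; and the condition in item 6 ($q=q_0^3=2^{3k}$, $k$ odd, equivalently $3\mid q_0+1$ and the index-$3$ extension $PGU_3(q_0)$ of $U_3(q_0)$ embedding in $U_3(q)$) needs an actual determinant computation rather than an assertion that ``the cubic twist is available.''
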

In the case that $q=2$, the group of order 36 has structure $3^2:4$. Since the maximal subgroups of $U_3(q)$ are described in geometric terms, it will be convenient to work with a classification of the elements of $U_3(q)$ based on the geometric objects they fix. We will consider the following three types of elements in $U_3(q)$:

\begin{description}
\item[Type 1:] Elements that fix an absolute point of $PG(2, q^2)$.
\item[Type 2:] Elements that fix a nonabsolute point of $PG(2, q^2)$ but do not fix an absolute point.
\item[Type 3:] Elements that do not fix any points of $PG(2, q^2)$.
\end{description}
Each element of $U_3(q)$ is of exactly one of these types, and we consider the elements of each type in turn.

\subsection{Elements of type 1}
We observe that the elements of the Sylow $p$-subgroups (where $p$ is the prime divisor of $q$) of $U_3(q)$ are of this type. The stabilizers in $U_3(q)$ of the absolute points of $PG(2,q^2)$ are the normalizers of the Sylow $p$-subgroup of $U_3(q)$. These Sylow $p$-subgroups are nonabelian groups of order $q^3$, with exponent $p$ if $p$ is odd, and exponent 4 if $p=2$. Distinct Sylow $p$-subgroups of $U_3(q)$ intersect trivially, so there are a total of $q^6-1$ nonidentity elements in the Sylow $p$-subgroups of $U_3(q)$.

\subsection{Elements of type 2}
Here we will prove that an element of $U_3(q)$ which fixes a nonabsolute point of $PG(2,q^2)$ but does not fix any absolute points will fix exactly three points of $PG(2,q^2)$ which are the vertices of a self-polar triangle. From this it follows that the elements of type 2 in $U_3(q)$ can be covered using any collection $S$ of stabilizers of nonabsolute points such that $S$ contains the stabilizer of at least one vertex from each self-polar triangle.\\

As in \cite{Ennola1962, Wall1963}, for a monic polynomial $g(x) = x^n + \alpha_{n-1}x^{n-1} + \dots \alpha_1 x+\alpha_0 \in \mathbb{F}_{q^2}[x]$ with $\alpha_0 \neq 0$ we define $\tilde{g}(x) = \overline{\alpha_0}^{-1}(\overline{\alpha_0}x^n +\overline{\alpha_1}x^{n-1} + \dots +\overline{\alpha_{n-1}}x +1)$. The following properties are easily proven.

\begin{lemma} \label{lem:properties of tilde}
Let $g,h \in \mathbb{F}_{q^2}[x]$ be monic polynomials with $g(0) \neq 0$ and $h(0) \neq 0$. Then,
\begin{enumerate}
\item $\widetilde{gh}(x)  = \tilde{g}(x) \tilde{h}(x)$, and
\item if $T \in GU_{n}(q)$ and $g(x)$ is the minimal polynomial for $T$, then $\tilde{g}(x) = g(x)$.
\end{enumerate}
\end{lemma}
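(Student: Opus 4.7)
The plan is to rewrite $\tilde{g}$ in a uniform closed form and then read both parts off of it. Let $\overline{g}$ denote the polynomial obtained from $g$ by applying the Frobenius $\alpha \mapsto \alpha^q$ to every coefficient; a direct expansion shows that
\[
\tilde{g}(x) = \overline{g(0)}^{-1}\, x^n\, \overline{g}(1/x),
\]
where $n = \deg g$. From this formulation, part (1) becomes a bookkeeping exercise: since $\deg(gh) = \deg g + \deg h$, $(gh)(0) = g(0)h(0)$, and both conjugation of coefficients and evaluation at $1/x$ are multiplicative, both $\widetilde{gh}(x)$ and $\tilde g(x)\tilde h(x)$ coincide with
\[
\overline{g(0)}^{-1}\,\overline{h(0)}^{-1}\, x^{n+m}\, \overline{g}(1/x)\, \overline{h}(1/x),
\]
where $m = \deg h$.

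For part (2), the key input is that $T \in GU_n(q)$ is unitary with respect to $f$, so $T^{-1} = T^*$, where $T^*$ is the adjoint of $T$ with respect to $f$. Computing in any orthonormal basis, the matrix of $T^*$ is the conjugate transpose of the matrix of $T$; since transposition does not affect the minimal polynomial, the minimal polynomial of $T^*$ is $\overline{g}(x)$. On the other hand, if $g(x) = x^n + \alpha_{n-1}x^{n-1} + \cdots + \alpha_0$ with $\alpha_0 \neq 0$, multiplying $g(T) = 0$ through by $\alpha_0^{-1} T^{-n}$ shows that $T^{-1}$ annihilates the monic polynomial $\alpha_0^{-1} x^n g(1/x)$ of degree $n$, which must therefore be the minimal polynomial of $T^{-1}$. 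Equating the two descriptions of the minimal polynomial of $T^{-1} = T^*$ gives $\overline{g}(x) = \alpha_0^{-1} x^n g(1/x)$; applying the Frobenius to both sides yields
\[
g(x) \;=\; \overline{\alpha_0}^{-1}\, x^n\, \overline{g}(1/x) \;=\; \tilde g(x),
\]
as required.

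No substantial obstacle is anticipated: part (1) is a formal identity once the closed form for $\tilde g$ is in hand, and part (2) reduces to combining the unitary identity $T^{-1} = T^*$ with the standard descriptions of how inverting a matrix and conjugating its entries affect the minimal polynomial. The only points of care are to verify at the outset that the closed form really does agree with the coefficient-by-coefficient definition given in the statement, and to track conjugates carefully when passing from $\overline{g}(x) = \alpha_0^{-1} x^n g(1/x)$ back to $g(x) = \tilde g(x)$.
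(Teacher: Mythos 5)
Your proof is correct. The paper itself offers no argument for this lemma (it is stated with the remark that the properties are ``easily proven''), so there is nothing to compare against; your closed form $\tilde g(x) = \overline{g(0)}^{-1}x^{n}\overline{g}(1/x)$ does match the coefficientwise definition, part (1) follows formally from it, and part (2) correctly combines $T^{*}=T^{-1}$ with the effect of conjugate-transposition and of inversion on minimal polynomials, finishing with the observation that applying the Frobenius twice is the identity on $\mathbb{F}_{q^2}$. The only step worth making explicit is the claim that $\alpha_0^{-1}x^{n}g(1/x)$ ``must therefore be'' the minimal polynomial of $T^{-1}$: you know $T^{-1}$ is annihilated by this monic degree-$n$ polynomial, but to conclude equality you should note that the minimal polynomial of $T^{-1}$ has degree exactly $n$, which holds because $\mathbb{F}_{q^2}[T^{-1}]=\mathbb{F}_{q^2}[T]$ (each of $T$, $T^{-1}$ is a polynomial in the other, using $\alpha_0\neq 0$). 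With that one line added, the argument is complete.
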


The following lemma is Lemma 2 of \cite{Ennola1962}.

\begin{lemma} \label{lem:Ennola}
Let $g(x) \in \mathbb{F}_{q^2}[x]$ be irreducible and monic of degree $n$ with $g(0) \neq 0$. If $\tilde{g}(x) = g(x)$, then $n$ is odd and every root $\xi$ of $g(x)$ satisfies $\xi^{q^n+1} = 1$.
\end{lemma}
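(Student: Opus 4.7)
The plan is to translate the equation $\tilde g = g$ into a statement about the roots of $g$, and then analyze the resulting permutation of the roots using the Galois structure.

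First I would express $\tilde g$ in terms of the roots. Writing $g(x) = \prod_{i=1}^n (x - \xi_i)$ in an algebraic closure and observing that $\tilde g(x) = \overline{\alpha_0}^{-1} x^n \overline{g}(1/x)$, where $\overline{g}$ denotes the polynomial obtained by applying $\alpha \mapsto \alpha^q$ to the coefficients of $g$, a direct calculation using $\prod_i \xi_i = (-1)^n \alpha_0$ gives
\[
\tilde g(x) \;=\; \prod_{i=1}^n \bigl(x - \xi_i^{-q}\bigr).
\]
Thus $\tilde g = g$ is equivalent to the map $\sigma\colon \xi \mapsto \xi^{-q}$ permuting the root set of $g$.

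Next, fix a root $\xi$. Since $g$ is irreducible of degree $n$ over $\mathbb{F}_{q^2}$, its roots are the Galois conjugates $\xi^{q^{2k}}$ for $k=0,1,\dots,n-1$, on which the Frobenius $\phi\colon x\mapsto x^{q^2}$ acts as an $n$-cycle. The key observations are (i) $\sigma$ is exponentiation by $-q$, hence an endomorphism of the cyclic group $\langle\xi\rangle$ that commutes with $\phi$, and (ii) $\sigma^2 = \phi$. Because $\phi$ is transitive on the $n$ roots, so is $\sigma$, making $\sigma$ an $n$-cycle; but the square of an $n$-cycle is again an $n$-cycle if and only if $n$ is odd, which forces $n$ odd. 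Explicitly, writing $\xi^{-q} = \xi^{q^{2j}}$ for the unique $j \in \{0,1,\dots,n-1\}$ for which this is a root of $g$, the identity $\sigma^2 = \phi$ on indices becomes $2j \equiv 1 \pmod{n}$, which is solvable precisely when $n$ is odd; taking $j = (n+1)/2$ yields $\xi^{q^{n+1}+q} = 1$, i.e. $\xi^{q(q^n+1)} = 1$.

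Finally, I would upgrade $\xi^{q(q^n+1)} = 1$ to $\xi^{q^n+1} = 1$ by noting that $\xi \in \mathbb{F}_{q^{2n}}^{*}$, so the order of $\xi$ divides $q^{2n}-1$, which is coprime to $q$. Therefore the order of $\xi$ divides $\gcd\bigl(q(q^n+1),\, q^{2n}-1\bigr) = \gcd(q^n+1,\, q^{2n}-1) = q^n+1$; the same relation then holds for every root of $g$ by Galois conjugation. The main obstacle is the combinatorial parity step, namely recognising that $\sigma^2$ being an $n$-cycle forces $n$ to be odd; the remaining bookkeeping with Frobenius and cyclotomic divisibility is routine.
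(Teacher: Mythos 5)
The paper does not actually prove this lemma: it is quoted verbatim as Lemma 2 of Ennola's paper on conjugacy classes of finite unitary groups, so there is no in-text argument to compare against. Your proof is correct and self-contained, and it reconstructs what is essentially the standard argument. The identity $\tilde g(x) = \prod_i (x-\xi_i^{-q})$ checks out (the sign $(-1)^{n(q+1)}=1$ works out in both parities of $q$), so $\tilde g = g$ says exactly that $\sigma\colon \xi\mapsto \xi^{-q}$ permutes the roots; since $\sigma^2$ is the $q^2$-Frobenius, which acts as an $n$-cycle on the roots of an irreducible degree-$n$ polynomial over $\mathbb{F}_{q^2}$, transitivity of $\langle\sigma\rangle$ forces $\sigma$ to be an $n$-cycle, and the parity observation (or equivalently the solvability of $2j\equiv 1\pmod n$) forces $n$ odd. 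The final descent from $\xi^{q(q^n+1)}=1$ to $\xi^{q^n+1}=1$ via $\gcd(q,\,q^{2n}-1)=1$ is also right, and conjugating by Frobenius transfers the relation to all roots. The only cosmetic quibble is the degenerate case $n=1$, where your chosen representative $j=(n+1)/2=1$ falls outside $\{0,\dots,n-1\}$; the congruence $2j\equiv 1\pmod n$ is what matters, and the conclusion $\xi^{q+1}=1$ still follows, so nothing breaks. In short: the paper outsources this to Ennola, and you have supplied a complete and valid proof of the cited result.
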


\begin{theorem} \label{thm:must fix a self polar triangle}
If $\Delta$ is a self-polar triangle in $PG(2,q^2)$, then there are elements of $U_3(q)$ which fix the three vertices of $\Delta$ and no other points of $PG(2,q^2)$. Conversely, if $\tau \in U_3(q)$ fixes a nonabsolute point but does not fix an absolute point of $PG(2,q^2)$, then $\tau$ fixes exactly three points of $PG(2,q^2)$ which form a self-polar triangle.
\end{theorem}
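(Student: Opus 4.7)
The overall plan is to analyze, in both directions, the Jordan structure of a preimage $T\in SU_3(q)$ of the collineation in question, combined with the constraints that preservation of $f$ imposes on eigenvectors. For the forward direction, let $[v_1],[v_2],[v_3]$ be the vertices of the self-polar triangle. Because the $[v_i]$ are nonabsolute, the $v_i$ are nonisotropic, and self-polarity means $[v_i]\in[v_j]^\perp$ for $i\ne j$, so $f(v_i,v_j)=0$. Rescaling yields an orthonormal basis of $\mathbb{F}_{q^2}^3$. In this basis any diagonal matrix $\mathrm{diag}(\alpha_1,\alpha_2,\alpha_3)$ with $\alpha_i^{q+1}=1$ preserves $f$ and lies in $SU_3(q)$ precisely when $\alpha_1\alpha_2\alpha_3=1$. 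Its image in $U_3(q)$ fixes each $[v_i]$, and fixes no other point of $PG(2,q^2)$ exactly when the $\alpha_i$ are pairwise distinct (otherwise a repeated eigenvalue produces a $2$-dimensional eigenspace, hence a pointwise-fixed projective line). What remains is a short existence check inside the cyclic group of $(q+1)$st roots of unity in $\mathbb{F}_{q^2}^*$: for any prime power $q\ge 2$ the triple $(1,\zeta,\zeta^{-1})$ with $\zeta$ a primitive $(q+1)$st root of unity works.

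For the converse, let $T\in SU_3(q)$ lift $\tau$, and let $p(x)\in\mathbb{F}_{q^2}[x]$ be its characteristic polynomial. An argument analogous to Lemma \ref{lem:properties of tilde} gives $\tilde{p}=p$, so the irreducible factors of $p$ are either fixed under $g\mapsto\tilde{g}$ or paired. An irreducible quadratic factor would have to be self-dual, contradicting Lemma \ref{lem:Ennola}, so $p$ cannot factor as a linear polynomial times an irreducible quadratic; an irreducible cubic $p$ would give a type-3 element with no fixed points, contrary to hypothesis. Hence $p$ splits over $\mathbb{F}_{q^2}$. The core of the argument then rests on two identities derived from $f(Tx,Ty)=f(x,y)$: for an eigenvector $v$ of $T$ with eigenvalue $\lambda$, $(\lambda\overline{\lambda}-1)f(v,v)=0$, so $[v]$ is absolute as soon as $\lambda^{q+1}\ne 1$; and for eigenvectors $v,w$ with eigenvalues $\lambda\ne\mu$, $(1-\lambda\overline{\mu})f(v,w)=0$. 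Under the hypothesis that $\tau$ fixes no absolute point, the first identity forces every eigenvalue to satisfy $\lambda^{q+1}=1$, and the second then forces eigenvectors with distinct eigenvalues to be orthogonal. In the case of three distinct eigenvalues, the eigenvectors thus form an orthogonal basis, and the three fixed points are the vertices of a self-polar triangle.

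All other Jordan types must be ruled out as incompatible with the hypothesis. A $2$-dimensional eigenspace $V$ for $\lambda$ together with a $1$-dimensional eigenspace for a distinct eigenvalue $\mu$ makes $V$ nondegenerate (its orthogonal complement being the $\mu$-eigenspace), and any such hermitian line carries $q+1$ absolute points, each fixed. A Jordan block of size $\ge 2$ on an eigenvalue $\lambda$, written $Tv_2=\lambda v_2+v_1$, yields $f(v_1,v_2)=f(Tv_1,Tv_2)=\lambda\overline{\lambda}f(v_1,v_2)+\lambda f(v_1,v_1)$, so $\lambda f(v_1,v_1)=0$ and $v_1$ is isotropic, producing the fixed absolute point $[v_1]$; the same computation rules out the $(2,1)$-block case with a single eigenvalue (where $V^\perp\subseteq V$ is automatic since $V^\perp$ must be spanned by an eigenvector and $\lambda$ is the only eigenvalue), and the single $(3)$-block case. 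Finally $T=\lambda I$ gives the identity collineation, which fixes every point. So a self-polar triangle is the only configuration consistent with the hypothesis.

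The main obstacle I anticipate is the careful Jordan-form bookkeeping in the converse: there are enough subcases (three distinct eigenvalues; a diagonalizable repeated eigenvalue; a non-diagonalizable repeated eigenvalue; a triple eigenvalue with Jordan types $(1,1,1)$, $(2,1)$, or $(3)$) that one must be systematic, and in each degenerate case the delicate point is tracking whether a forced fixed point is absolute or nonabsolute. Once the two orthogonality identities above are in hand, however, every subcase reduces to a short direct calculation.
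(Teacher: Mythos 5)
Your proof is correct, and your forward direction coincides with the paper's (orthonormal basis from the self-polar triangle, diagonal element of $SU_3(q)$ with pairwise distinct $(q+1)$st roots of unity). In the converse the organization genuinely differs. The paper anchors the argument at the given fixed nonabsolute point $P=[e_0]$: it restricts $T$ to the $T$-invariant plane $\{e_0\}^\perp$, applies Lemma \ref{lem:properties of tilde} and Lemma \ref{lem:Ennola} to the degree-two characteristic polynomial of that restriction (where characteristic and minimal polynomials coincide, so the lemma as stated applies) to force a splitting, and then rules out a repeated eigenvalue by a short ad hoc argument with the line $\{e_0\}^\perp\cap\{v\}^\perp$. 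You instead apply the identity $\tilde{p}=p$ to the full cubic characteristic polynomial of $T$ --- which requires the characteristic-polynomial analogue of Lemma \ref{lem:properties of tilde}, true and standard but not literally what is stated there, so the ``analogous argument'' should be supplied --- and then run a complete Jordan-type case analysis driven by the two isometry identities. Your route is longer but more self-contained and symmetric: in particular you actually prove the assertion, which the paper only makes in passing, that a two-dimensional eigenspace forces a fixed absolute point (the eigenspace is a nondegenerate Hermitian plane, so the line $[V]$ carries $q+1$ fixed absolute points). One small point to tighten: in the Jordan-block computation the identity you display really yields $(1-\lambda\overline{\lambda})f(v_1,v_2)=\lambda f(v_1,v_1)$, so you should first invoke your first identity, together with the standing assumption that the fixed point $[v_1]$ is not absolute, to get $\lambda\overline{\lambda}=1$ before concluding $f(v_1,v_1)=0$ and deriving the contradiction.
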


\begin{proof}
First, suppose that $\Delta = \{P_0, P_1, P_2\}$ is a self-polar triangle in $PG(2,q^2)$. Then there are vectors $e_0, e_1, e_2 \in \mathbb{F}_{q^2}^3$ with $P_i = [e_i]$ for $i=0,1,2$ and such that $\{e_0, e_1,e_2\}$ is an orthonormal basis for $\mathbb{F}_{q^2}^3$. Choose any $\lambda, \mu \in \mathbb{F}_{q^2}^*$ satisfying $\lambda^{q+1} = \mu^{q+1} = 1$ so that no two of $\lambda, \mu,$ and $(\lambda \mu)^{-1}$ are equal. This can always be done since we may choose, for example, $\lambda =1$ and $\mu$ to be any element of order $q+1$ in $\mathbb{F}_{q^2}^*$. Let $T$ be the linear transformation from $\mathbb{F}_{q^q}^3$ to itself defined by $T(e_0) = \lambda e_0$, $T(e_1) = \mu e_1$, and $T(e_2) = (\lambda \mu)^{-1} e_2$. It is easy to see that $T \in SU_3(q)$ and that that $T$ has no eigenvectors except for scalar multiples of the $e_i$. It follows that the collineation of $PG(2,q^2)$ induced by $T$ fixes $P_0, P_1,$ and $P_2$ and fixes no other points of $PG(2,q^2)$.

Suppose now that $\tau \in U_3(q)$ fixes a nonabsolute point $P$ of $PG(2,q^2)$, but does not fix any absolute points. Let $T \in SU_3(q)$ be such that the collineation of $PG(2,q^2)$ induced by $T$ is $\tau$. Since $P$ is a nonabsolute point of $PG(2,q^2)$ there is $e_0 \in \mathbb{F}_{q^2}^3$ such that $P= [e_0]$ and $f(e_0, e_0) = 1$. Note that $e_0$ is an eigenvector for $T$ corresponding to some eigenvalue $\lambda \in \mathbb{F}_{q^2}$ satisfying $\lambda^{q+1} = 1$, and that $\{e_0\}$ can be extended to an orthonormal basis $\{e_0,e_1,e_2\}$ for $\mathbb{F}_{q^2}^3$. Note that $\text{span}\{e_1,e_2\} = \{e_0\}^\perp$ is $T$-invariant, so in this basis the matrix for $T$ has the form

\[\begin{bmatrix}
\lambda & 0 & 0\\
0 & \alpha & \beta\\
0 & \gamma & \delta
\end{bmatrix}\]

where $\lambda(\alpha \delta - \beta \gamma) = 1$. The characteristic polynomial for $T$ is $c(x) = (x-\lambda)g(x)$, where $g(x) = x^2 -(\alpha +\delta)x +\alpha \delta - \beta \gamma$ is the characteristic polynomial for the restriction $T|_{\{e_0\}^\perp}$ of $T$ to $\{e_0\}^\perp$.

We will now show that $g(x)$ factors over $\mathbb{F}_{q^2}$ by contradiction. Suppose that $g(x)$ is irreducible. Then $g(x)$ is the minimal polynomial for $T|_{\{e_0\}^\perp}$, and $T|_{\{e_0\}^\perp}$ preserves the restriction of the form $f$ to $\text{span}\{e_1,e_2\}$, and so is an element of $GU_2(q)$. Thus by Lemma \ref{lem:properties of tilde}, $\tilde{g}(x) = g(x)$. Note also that $T|_{\{e_0\}^\perp}$ is invertible, so $g(0) \neq 0$. Now by Lemma \ref{lem:Ennola}, the degree of $g(x)$ is odd which contradicts the fact that the degree of $g(x)$ is two. Consequently $g(x)$ factors in $\mathbb{F}_{q^2}[x]$.

Let us write $g(x) = (x-\mu)(x-\nu)$, where $\mu, \nu \in \mathbb{F}_{q^2}$. Note that neither $\mu$ nor $\nu$ can be equal to $\lambda$ or else $T$ would have a 2-dimensional eigenspace and would fix an absolute point of $PG(2,q^2)$. We must also have $\mu \neq \nu$ as follows: $\mu$ is an eigenvlue for $T|_{\{e_0\}^\perp}$ so there is a nonzero vector $v \in \text{span}\{e_1,e_2\}$ such that $T(v) = \mu v$. Since $T$ fixes no absolute points, $v$ is nonisotropic. Now $T$ preserves $\{e_0\}^\perp \cap \{v\}^\perp$, which is a 1-dimensional subspace of $\mathbb{F}_{q^2}^3$. Let $w$ generate this subspace. Note that since $v$ is nonisotrpoic and $w \in \{v\}^\perp$, $w$ is not a scalar multiple of $v$. Moreover, $w$ is also an eigenvector for $T|_{\{e_0\}^\perp}$. If $T(w) = \mu w$, then $T$ would have a 2-dimensional eigenspace, which as noted above is not the case. Therefore we can conclude that $T(w) = \nu w$ and that $\nu \neq \mu$. Since $T$ fixes no absolute points, $w$ is nonisotropic. Now, $e_0, v,$ and $w$ are eigenvectors corresponding to distinct eigenvalues of $T$ and so $\{e_0,v,w\}$ is a basis for $\mathbb{F}_{q^2}^3$. Thus, $\{P, [v],[w]\}$ is a triangle in $PG(2,q^2)$, and since the vectors $e_0, v,$ and $w$ are pairwise orthogonal, this triangle is self-polar. Finally, since $T$ fixes exactly these three points of $PG(2,q^2)$ and no others, the same holds for $\tau$.

\end{proof}

\subsection{Elements of type 3}
Our first goal in this section is to prove that elements of type 3 in $U_3(q)$ actually exist. Since these elements fix no points of $PG(2,q^2)$, and are therefore not covered by the stabilizers in $U_3(q)$ of the points of $PG(2,q^2)$, our second goal is to find a geometric object that these elements do fix, so that these elements may be covered using the corresponding stabilizers. As will be seen below, the desired object is a self-polar triangle in $PG(2,q^6)$.

\begin{lemma} \label{lem:number theory}
Let $r$ be the largest prime divisor of $q^2-q+1$. If $q>2$, then $r \geq 5$ and $\gcd(r, q^3(q-1)(q+1)^2) = 1$.
\end{lemma}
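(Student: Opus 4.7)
The plan is to prove both claims by first characterizing which primes can divide $q^2-q+1$ together with a factor of $q^3(q-1)(q+1)^2$, and then establishing the lower bound $r \geq 5$.

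For the gcd claim, I would examine an arbitrary prime $p$ dividing $q^2-q+1$. The integer $q^2-q+1 = q(q-1)+1$ is always odd, so $p \neq 2$. If $p \mid q$, then $p$ divides $q^2 - q$ and hence $1$, which is absurd. If $p \mid q-1$, reducing modulo $p$ gives $q^2 - q + 1 \equiv 1 \pmod p$, another contradiction. Finally, if $p \mid q+1$, then $q \equiv -1 \pmod p$ yields $q^2 - q + 1 \equiv 3 \pmod p$, forcing $p = 3$. Thus the only prime divisor of $q^2 - q + 1$ which can also divide $q^3(q-1)(q+1)^2$ is $3$, and ruling out $r = 3$ is exactly the condition $r \geq 5$.

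For the lower bound, since every prime divisor of $q^2-q+1$ is either $3$ or at least $5$, it is enough to show $q^2-q+1$ is not a pure power of $3$ when $q > 2$. Suppose $q^2 - q + 1 = 3^k$. Multiplying by $4$ and completing the square gives $(2q-1)^2 = 4 \cdot 3^k - 3$. When $k = 1$ this forces $2q - 1 = 3$, hence $q = 2$, contradicting $q > 2$. When $k \geq 2$, the right-hand side is divisible by $3$, so $3 \mid 2q - 1$; writing $2q - 1 = 3m$ and substituting yields $3m^2 = 4 \cdot 3^{k-1} - 1$, whose right side is $\equiv -1 \pmod 3$ while the left is $\equiv 0 \pmod 3$, the desired contradiction.

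The main obstacle is really just this last Diophantine step---ruling out $q^2 - q + 1$ being a power of $3$---and it falls out cleanly via completing the square. A more high-powered alternative is Zsygmondy's theorem: for $q > 2$ there is a prime $r'$ dividing $q^6 - 1$ but no $q^m - 1$ for $m < 6$, and any such $r'$ divides $\Phi_6(q) = q^2 - q + 1$ while having multiplicative order $6$ modulo $r'$, so $r' \equiv 1 \pmod 6$ and $r' \geq 7$. The elementary route is preferable here since it avoids importing external machinery.
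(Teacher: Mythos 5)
Your proof is correct and follows essentially the same route as the paper: both arguments reduce to showing that the only possible common prime divisor of $q^2-q+1$ and $q^3(q-1)(q+1)^2$ is $3$, and that $q^2-q+1$ is not a power of $3$ when $q>2$. The only cosmetic difference is in the latter step, where the paper observes $q^2-q+1=(q-2)(q+1)+3\equiv 3\pmod 9$ whenever $3\mid q+1$, while you use the completing-the-square identity $(2q-1)^2=4\cdot 3^k-3$; both amount to the same fact that $9\nmid q^2-q+1$.
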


\begin{proof}
Note that $q^2-q+1$ is odd and $q^2-q+1  = (q-2)(q+1) +3$ is divisible by 3 if and only if $q+1$ is, in which case $q^2-q+1 \equiv 3 \enspace (\text{mod }9)$.   Thus $q^2-q+1$ is not a power of three unless $q=2$. Therefore when $q>2$ the largest prime divisor $r$ of $q^2-q+1$ is at least 5. Since $r \geq 5$ and $\gcd(q^2-q+1, q^3(q-1)(q+1)^2) = \gcd(3, q+1)$, it follows that  $\gcd(r, q^3(q-1)(q+1)^2) = 1$.
\end{proof}

\begin{theorem} \label{thm:existence of elements not fixing a point}
If $q>2$ then $U_3(q)$ contains elements not fixing any points of $PG(2,q^2)$.
\end{theorem}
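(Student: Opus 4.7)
The plan is to exhibit such an element by an order argument. Recall that
\[|U_3(q)| = \frac{q^3(q^3+1)(q^2-1)}{\gcd(3,q+1)} = \frac{q^3(q-1)(q+1)^2(q^2-q+1)}{\gcd(3,q+1)}\,,\]
so $q^2 - q + 1$ divides $|U_3(q)|$. Let $r$ be the largest prime divisor of $q^2 - q + 1$, which by Lemma \ref{lem:number theory} satisfies $r \geq 5$ since $q > 2$. By Cauchy's theorem, $U_3(q)$ contains an element $\tau$ of order $r$.

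Next I would show that $\tau$ cannot fix any point of $PG(2, q^2)$. If $\tau$ fixed a point $P$ of $PG(2, q^2)$, then $\tau$ would lie in the stabilizer of $P$ in $U_3(q)$. By Theorems \ref{thm:Mitchell} and \ref{thm:Hartley}, the order of the stabilizer of $P$ is either $q^3(q+1)(q-1)/\gcd(3,q+1)$ (if $P$ is absolute) or $q(q+1)^2(q-1)/\gcd(3,q+1)$ (if $P$ is nonabsolute); in either case this order divides $q^3(q-1)(q+1)^2$. But Lemma \ref{lem:number theory} gives $\gcd(r, q^3(q-1)(q+1)^2) = 1$, so $r$ cannot divide the order of the stabilizer, a contradiction. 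Hence $\tau$ fixes no point of $PG(2, q^2)$, i.e., $\tau$ is an element of type 3.

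This argument is essentially immediate once Lemma \ref{lem:number theory} is in hand, so there is no real obstacle: the heart of the matter is the coprimality condition between $r$ and the orders of the point stabilizers, which has already been established. The only thing I would be careful about is ensuring I cite the stabilizer orders from the correct theorem depending on the parity of $q$, but both Mitchell's and Hartley's classifications yield the same stabilizer orders in items 1 and 2, so the argument is uniform in $q$.
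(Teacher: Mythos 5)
Your proposal is correct and follows essentially the same route as the paper: take the largest prime divisor $r$ of $q^2-q+1$, invoke Lemma \ref{lem:number theory} and Cauchy's theorem to produce an element of order $r$, and conclude from the coprimality $\gcd(r, q^3(q-1)(q+1)^2)=1$ that this element lies in no point stabilizer. The only difference is that you spell out the stabilizer orders from Theorems \ref{thm:Mitchell} and \ref{thm:Hartley} explicitly, which the paper leaves implicit.
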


\begin{proof}
Let $r$ be as in the previous lemma. Then $r$ divides the order of $U_3(q)$, so by Cauchy's Theorem $U_3(q)$ contains an element $\rho$ of order $r$. Since $r$ divides neither the order of the stabilizers of the absolute points of $PG(2,q^2)$, nor the order of the stabilizers of the nonabsolute points, $\rho$ is not contained in a stabilizer of either type, and therefore fixes no point of $PG(2,q^2)$.
\end{proof}

Our next goal is to prove that each element of type 3 in $U_3(q)$ is contained in a cyclic subgroup of order $(q^2-q+1)/\gcd(3,q+1)$ inside the pointwise stabilizer of a triangle in $PG(2,q^6)$. Let $\tau \in U_3(q)$ be an element which does not fix any points of $PG(2,q^2)$, and let $T \in SU_3(q)$ be a linear transformation whose induced collineation is $\tau$. Note that $T$ has no eigenvalues in $\mathbb{F}_{q^2}$. Consequently, the characteristic polynomial for $T$ is irreducible of degree 3, and $T$ has three distinct eigenvalues in $\mathbb{F}_{q^6}$, which must be of the form $\alpha, \alpha^{q^2}, \alpha^{q^4}$ for some $\alpha$ which is in $\mathbb{F}_{q^6}$ but not in $\mathbb{F}_{q^2}$ such that $\alpha^{q^4+q^2+1} = 1$. We may choose a basis $B = \{b_0, b_1, b_2\}$ such that the matrix for $T$ in this basis is in rational canonical form . That is, in this basis the matrix for $T$ is 

\[\begin{bmatrix}
0 & 0 & 1\\
1 & 0 & \beta\\
0 & 1 & \gamma
\end{bmatrix}\,,\]
where $\beta = -\alpha^{q^2+1}-\alpha^{q^4+q^2}-\alpha^{q^4+1}$, and $\gamma  =\alpha + \alpha^{q^2}+\alpha^{q^4}$. Note that $B$ is also a basis for $\mathbb{F}_{q^6}^3$ over $\mathbb{F}_{q^6}$, and that we can extend $T$ to a linear transformation $\hat{T}$ from $\mathbb{F}_{q^6}^3$ to itself by $\hat{T}(\sum_{i=0}^2 \delta_i b_i) = \sum_{i=0}^2 \delta_i T(b_i)$. We may also extend the form $f$ to a conjugate symmetric sesquilinear form $\hat{f}$ on $\mathbb{F}_{q^6}^3$ by $\hat{f}(\sum_{i=0}^2 \delta_i b_i, \sum_{j=0}^2 \varepsilon_j b_j)  = \sum_{i,j=0}^2 \delta_i \varepsilon_j^{q^3} f(b_i, b_j)$. It is easy to verify that $\hat{T}$ is an isometry for the form $\hat{f}$. Let us define 

\begin{align*}
e_0 =& \ D^{-1}[(\alpha^{2q^4+q^2} - \alpha^{q^4+2q^2})b_0 + (\alpha^{2q^2} - \alpha^{2q^4})b_1 +(\alpha^{q^4} - \alpha^{q^2})b_2]\\
e_1 =& \ D^{-1}[(\alpha^{q^4 +2} - \alpha^{2q^4+1})b_0 + (\alpha^{2q^4} -\alpha^2)b_1 + (\alpha - \alpha^{q^4})b_2]\\
e_2 =& \ D^{-1}[(\alpha^{2q^2+1} - \alpha^{q^2+2})b_0 + (\alpha^2 - \alpha^{2q^2})b_1 + (\alpha^{q^2} - \alpha)b_2],
\end{align*}
where $D = (\alpha^{q^4} - \alpha^{q^2})(\alpha^{q^4} - \alpha)(\alpha^{q^2} - \alpha)$. Then $e_i$ is an eigenvector for $\hat{T}$ corresponding to the eigenvector $\alpha^{q^{2i}}$ for $i=0,1,2$. Evidently, $\hat{T}$ fixes each point $[e_i]$, $i=0,1,2$, but fixes no other points of $PG(2,q^6)$. Moreover, $\{e_0,e_1,e_2\}$ is a linearly independent set of of vectors in $\mathbb{F}_{q^6}^3$, and so the projective points $[e_0], [e_1]$, and $[e_2]$ in $PG(2,q^6)$ are not collinear and therefore form a triangle. Let $\ell_i$ be the line of $PG(2,q^6)$ which passes through the points of $\{[e_0], [e_1], [e_2]\}\setminus \{[e_i]\}$ for $i=0,1,2$. That is, $\ell_i$ is the side of the triangle $\Delta_\tau = \{[e_0], [e_1], [e_2]\}$ opposite vertex $[e_i]$. Clearly $\hat{T}$ fixes each line $\ell_i$, and since any collineation of a projective plane fixes equally many points and lines, it follows that these are the only lines of $PG(2,q^6)$ fixed by $\hat{T}$. We will now show that $\Delta_\tau$ is a self-polar triangle with respect to the polarity coresponding to $\hat{f}$. We will ultimately show that each of the $[e_i]$ is a nonabsolute point, but first we will show that an odd number of them are nonabsolute. Suppose that $[e_0]$ is an absolute point. Then $[e_0]^\perp$ is a line fixed by $\hat{T}$ which passes though $[e_0]$ and therefore must be one of $\ell_1$ or $\ell_2$. Suppose without loss of generality that $[e_0]^\perp = \ell_1$. Then $[e_0]$ is the unique absolute point on $\ell_1$, and $\ell_1$ passes through $[e_2]$, so we can conclude that $[e_2]$ is a nonabsolute point. Thus $[e_2]^\perp$ is a line fixed by $\hat{T}$ which does not pass through $[e_2]$, and consequently $[e_2]^\perp = \ell_2$. This forces $[e_1]^\perp$ to be $\ell_0$ which contains $[e_1]$ so $[e_1]$ is an absolute point. Thus exactly one or all three of the points $[e_0], [e_1]$, and $[e_2]$ are nonabsolute points.

Now, at least one of $[e_0], [e_1]$, and $[e_2]$ is a nonabsolute point. Relabeling the points if necesary, we may assume that $[e_0]$ is a nonabsolute point. Then,
\[\hat{f}(e_0, e_0)  = \hat{f}(\hat{T}(e_0), \hat{T}(e_0)) = \hat{f}(\alpha e_0, \alpha e_0) = \alpha^{q^3+1}\hat{f}(e_0,e_0)\,.\]
Since $\hat{f}(e_0,e_0) \neq 0$, it follows that $\alpha^{q^3+1} = 1$. Thus, since $\alpha^{q^4+q^2+1} = 1$ and $\gcd(q^3+1, q^4+q^2+1) = q^2-q+1$, we must have $\alpha^{q^2-q+1} =1$. We also must have

\[\hat{f}(e_0,e_1) = \hat{f}(\hat{T}(e_0), \hat{T}(e_1) = \hat{f}(\alpha e_0, \alpha^{q^2} e_1) = \alpha^{q^5+1}\hat{f}(e_0,e_1)\,,\]
from which it follows that $(\alpha^{q^5+1}-1)\hat{f}(e_0,e_1) = 0$. We claim that $\alpha^{q^5+1}$ does not equal 1. On the contrary, suppose that $\alpha^{q^5+1} = 1$. Then $\alpha^{\gcd(3,q+1)} = 1$, since $\alpha^{q^2-q+1} = 1$ and $\gcd(q^5+1, q^2-q+1) = \gcd(3,q+1)$. But every element of $\mathbb{F}_{q^6}^*$ of order dividing $\gcd(3,q+1)$ is in $\mathbb{F}_{q^2}^*$, and this contradicts the fact that $\alpha \notin \mathbb{F}_{q^2}$. Consequently, $\alpha^{q^5+1} \neq 1$ and we may conclude that $\hat{f}(e_0,e_1) = 0$. A similar line of reasoning shows that $\hat{f}(e_1,e_2) = \hat{f}(e_2,e_0) = 0$. Now, since $e_0,\  e_1$, and $e_2$ are linearly independent and pairwise orthogonal, at most one of $[e_0], [e_1]$, and $[e_2]$ is an absolute point. Since an even number of them are absolute points as proven above, we conclude that none of them are. Thus for $i=0,1,2$, $[e_i] \notin [e_i]^\perp$, and since $\{[e_0]^\perp, [e_1]^\perp, [e_2]^\perp\} = \{\ell_0, \ell_1, \ell_2\}$, we can conclude that $[e_i]^\perp = \ell_i$ for $i=0,1,2$, i.e. that $\Delta_\tau$ is a self-polar triangle.

Let $\zeta \in \mathbb{F}_{q^6}^*$ be an element of order $q^2-q+1$, and define $\hat{Z}$ to be the linear transformation from $\mathbb{F}_{q^6}^3$ to $\mathbb{F}_{q^6}^3$ defined by $\hat{Z}(e_i) = \zeta^{q^{2i}}e_i$ for $i=0,1,2$. It is easy to see that $\hat{Z}$ has determinant 1 and is an isometry with respect to $\hat{f}$, and moreover, that $\hat{T} \in \langle \hat{Z} \rangle$. Now the matrix for $\hat{Z}$ in the basis $\{b_0, b_1, b_2\}$ is  given by

\[A^{-1} \begin{bmatrix}
\zeta & 0 & 0\\
0 & \zeta^{q^2} & 0\\
0 & 0 & \zeta^{q^4}
\end{bmatrix}A\,,\]

where
\[
A=\begin{bmatrix}
1 & \alpha & \alpha^2\\
1 & \alpha^{q^2} & \alpha^{2q^2}\\
1 & \alpha^{q^4} & \alpha^{2q^4}
\end{bmatrix}\,.
\]

By a straightforward calculation, which we omit, one can verify that the entries of this matrix are all from $\mathbb{F}_{q^2}$. It follows that the restriction $Z$ of $\hat{Z}$ to $\mathbb{F}_{q^2}^3$ maps $\mathbb{F}_{q^2}^3$ to itself and is an element of $SU_3(q)$ of order $q^2-q+1$ which fixes $\Delta_\tau$ pointwise, and that $T \in \langle Z\rangle$. If $\sigma$ is the collineation of $PG(2,q^2)$ induced by $Z$, then $\sigma$ also fixes $\Delta_\tau$ pointwise, $\lvert \sigma \rvert = (q^2-q+1)/\gcd(3,q+1)$, and $\tau \in \langle \sigma\rangle$. Therefore we have proven the following theorem:

\begin{theorem} \label{thm:imaginary triangle}
If $\tau \in U_3(q)$ is an element that does not fix any points of $PG(2,q^2)$, then
\begin{enumerate}
\item $\tau$ fixes exactly three points of $PG(2,q^6)$ which form a triangle $\Delta_\tau$,
\item there is $\sigma \in U_3(q)$ which also fixes the three vertices of $\Delta_\tau$ such that
\begin{enumerate}
\item $\lvert \sigma \rvert  = (q^2-q+1)/\gcd(3,q+1)$, and
\item $\tau \in \langle \sigma \rangle$.
\end{enumerate}
\end{enumerate}
\end{theorem}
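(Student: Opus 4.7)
The statement collects the conclusions established throughout the preceding discussion, so my plan is to organize that material into three explicit steps. First, lift $\tau$ to $T \in SU_3(q)$ and use the hypothesis that $\tau$ has no fixed point of $PG(2,q^2)$ to force the characteristic polynomial of $T$ to be irreducible of degree $3$ over $\mathbb{F}_{q^2}$, with three simple roots $\alpha,\alpha^{q^2},\alpha^{q^4} \in \mathbb{F}_{q^6}$ subject to $\alpha^{1+q^2+q^4}=1$ (from $\det T=1$). Extend to $\hat T$ and $\hat f$ on $\mathbb{F}_{q^6}^3$ and pin down the explicit eigenvectors $e_0,e_1,e_2$; because the three eigenvalues are distinct the points $[e_0],[e_1],[e_2]$ are non-collinear and are exactly the fixed points of $\hat T$ in $PG(2,q^6)$, giving item (1) with $\Delta_\tau=\{[e_0],[e_1],[e_2]\}$.

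The next step, needed to construct $\sigma$, is to verify that $\Delta_\tau$ is self-polar with respect to $\hat f$. I would proceed in two parts: (a) $\hat T$ fixes exactly three lines (equal to its fixed-point count), which must be the sides $\ell_i$ of $\Delta_\tau$, and a short case analysis of which $\ell_j$ can equal $[e_i]^\perp$ forces the number of absolute vertices to be even; (b) applying $\hat f(\hat T e_i,\hat T e_j)=\hat f(e_i,e_j)$ at a nonabsolute vertex yields $\alpha^{q^3+1}=1$, which combined with $\alpha^{1+q^2+q^4}=1$ and $\gcd(q^3+1,1+q^2+q^4)=q^2-q+1$ gives $\alpha^{q^2-q+1}=1$; the off-diagonal computation forces $\hat f(e_i,e_j)=0$ for $i\ne j$ using $\gcd(q^5+1,q^2-q+1)=\gcd(3,q+1)$ and $\alpha\notin\mathbb{F}_{q^2}$. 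Pairwise orthogonality of the $e_i$ together with nondegeneracy of $\hat f$ permits at most one absolute vertex, so the even-count constraint forces none, and self-polarity follows.

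Finally, for item (2), let $\zeta\in\mathbb{F}_{q^6}^*$ have order $q^2-q+1$ and define $\hat Z(e_i)=\zeta^{q^{2i}}e_i$. The identities $1+q^2+q^4=(q^2+q+1)(q^2-q+1)$ and $q^3+1=(q+1)(q^2-q+1)$ immediately give $\det\hat Z=1$ and that $\hat Z$ preserves $\hat f$ on the orthogonal basis $\{e_0,e_1,e_2\}$; and $\hat T \in \langle \hat Z\rangle$ by construction. The main obstacle is showing that in the rational canonical basis $B$ the matrix of $\hat Z$, namely $A^{-1}\,\mathrm{diag}(\zeta,\zeta^{q^2},\zeta^{q^4})\,A$ with $A$ the Vandermonde in $(\alpha,\alpha^{q^2},\alpha^{q^4})$, has all entries in $\mathbb{F}_{q^2}$; I would handle this by observing that the $q^2$-power Frobenius permutes the columns of $A$ and the diagonal entries of $D$ by the same $3$-cycle (since $\alpha^{q^6}=\alpha$ and $\zeta^{q^6}=\zeta$), so entrywise Frobenius conjugation fixes $A^{-1}DA$. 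Once this is in hand, $\hat Z$ restricts to $Z\in SU_3(q)$ of order $q^2-q+1$ with $T\in\langle Z\rangle$; the induced collineation $\sigma$ then fixes each vertex of $\Delta_\tau$, has order $(q^2-q+1)/\gcd(3,q+1)$, and contains $\tau$, completing item (2).
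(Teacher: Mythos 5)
Your proposal is correct and follows essentially the same route as the paper: irreducibility of the characteristic polynomial, extension to $\hat{T}$ and $\hat{f}$ over $\mathbb{F}_{q^6}$, the parity argument on absolute vertices combined with the orthogonality computations to get self-polarity, and the construction of $\hat{Z}$ from an element $\zeta$ of order $q^2-q+1$. Your Frobenius-conjugation argument for why $A^{-1}\,\mathrm{diag}(\zeta,\zeta^{q^2},\zeta^{q^4})\,A$ has entries in $\mathbb{F}_{q^2}$ is a clean way to supply the ``straightforward calculation'' that the paper omits.
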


In particular, this theorem shows that every element of type 3 in $U_3(q)$ is contained in a cyclic subgroup generated by an element of order $(q^2-q+1)/\gcd(3,q+1)$, so in covering the elements of type 3 in $U_3(q)$ we need only consider how to cover those of order $(q^2-q+1)/\gcd(3,q+1)$. Our next goal is to prove Theorem \ref{thm:unique maximal subgroup}, which shows that the unique way to do this using maximal subgroups is by all of the stabilizers of the imaginary triangles fixed by the elements of type 3 in $U_3(q)$.

\begin{theorem} \label{thm:unique maximal subgroup}
If $q \geq 7$ and $\sigma \in U_3(q)$ has order $(q^2-q+1)/\gcd(3,q+1)$, then $\sigma$ is contained in a unique maximal subgroup of $U_3(q)$, namely the stabilizer of the imaginary triangle $\Delta_\sigma$.
\end{theorem}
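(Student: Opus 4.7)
The plan is to enumerate the maximal subgroups of $U_3(q)$ via Theorems \ref{thm:Mitchell} and \ref{thm:Hartley} and eliminate every type except the stabilizer of an imaginary triangle (which we already know from Theorem \ref{thm:imaginary triangle} contains $\sigma$ and equals the stabilizer of $\Delta_\sigma$). Set $d = \gcd(3, q+1)$ so that $|\sigma| = (q^2-q+1)/d$. The two arithmetic tools I will use throughout are: (i) Lemma \ref{lem:number theory}, supplying a prime $r \geq 5$ with $r \mid q^2-q+1$ and $r$ coprime to $q^3(q-1)(q+1)^2$; and (ii) the coprimality $\gcd(|\sigma|, 6(q-1)(q+1)) = 1$, which follows from $q^2-q+1 \equiv 1 \pmod{q-1}$, $q^2-q+1 \equiv 3 \pmod{q+1}$, the oddness of $q^2-q+1$, the fact that $v_3(q^2-q+1) \leq 1$, and the easy check that $5 \nmid q^2-q+1$ for every $q$.

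For items 1, 2, 3, and 5 of Mitchell's list (stabilizers of absolute or nonabsolute points, of a real triangle, or of a conic), each order divides $6q^3(q-1)(q+1)^2$; by (i), the prime $r$ divides $|\sigma|$ (since $\gcd(r,d)=1$) but not $|M|$, contradicting $\sigma \leq M$. For the Hessian groups and the exceptional subgroups isomorphic to $L_3(2)$, $A_6$, $S_6$, or $A_7$, the prime divisors of $|M|$ all lie in $\{2,3,5,7\}$; by (ii), $|\sigma|$ is supported only on primes $\geq 7$, so $|\sigma| \mid |M|$ would force $|\sigma|$ to be a power of $7$, and a quick direct check on the Pell-type equation $q^2-q+1 = 7^a d$ reveals $q=19$ with $|\sigma|=343$ as the only candidate with $q \geq 7$, where $343$ still divides none of $36, 72, 168, 216, 360, 720, 2520$.

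For the subfield subgroups $U_3(q_0)$ and $PGU_3(q_0)$ with $q = q_0^k$ and $k$ an odd prime $\geq 3$, I write $|PGU_3(q_0)| = q_0^3(q_0-1)(q_0+1)^2(q_0^2-q_0+1)$; since $k$ is odd, $(q_0-1) \mid (q-1)$ and $(q_0+1) \mid (q+1)$, so (ii) yields $\gcd(|\sigma|, q_0^3(q_0-1)(q_0+1)^2) = 1$ and thus $\sigma \leq M$ forces $|\sigma| \mid (q_0^2-q_0+1)$; but $|\sigma| = (q_0^{2k}-q_0^k+1)/d$ is strictly larger than $q_0^2-q_0+1$ for any $k \geq 3$, a contradiction.

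To finish, I establish uniqueness. Suppose $M$ is the stabilizer of some imaginary triangle $\Delta'$ with $\sigma \in M$. Since $M$ permutes the three vertices of $\Delta'$ as a set, there is a homomorphism $M \to S_3$; as $\gcd(|\sigma|, 6) = 1$, $\sigma$ lies in its kernel and fixes each vertex of $\Delta'$ pointwise. Theorem \ref{thm:imaginary triangle} states that $\sigma$ fixes exactly the three vertices of $\Delta_\sigma$ in $PG(2,q^6)$, so $\Delta' = \Delta_\sigma$ and $M$ is the stabilizer of $\Delta_\sigma$. The main obstacle in the plan is the Hessian/exceptional step, where the coprimality (ii) alone is not enough and one must still dispose of the ``power of $7$'' case; once that is handled, the remaining arguments are short consequences of (i), (ii), and the geometry in Theorem \ref{thm:imaginary triangle}.
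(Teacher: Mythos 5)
Your proof is correct, and its skeleton is the same as the paper's (this is essentially forced: one runs through the Mitchell--Hartley lists and eliminates everything except the stabilizer of $\Delta_\sigma$). The interesting differences are tactical. Where the paper verifies directly that $(q^2-q+1)/\gcd(3,q+1)$ fails to divide each relevant subgroup order, you instead use the prime $r$ of Lemma \ref{lem:number theory} together with the coprimality of $(q^2-q+1)/\gcd(3,q+1)$ to $30(q-1)(q+1)$ (the new ingredient being that $5\nmid q^2-q+1$, since $2$ is a nonresidue mod $5$). This pays off most in the Hessian/exceptional step: the paper handles items 8--12 of Mitchell's list by a case analysis over $q\in\{7,9,11,13,17,19,23,27,29,31\}$ after bounding $q<88$ and then $q<34$, whereas your observation that $\lvert\sigma\rvert$ has no prime factor below $7$ reduces everything to the $7$-part of the orders $36,\dots,2520$, which is at most $7 < \lvert\sigma\rvert$ --- so your ``Pell-type'' digression about $q=19$, while harmless, is not even needed (and as stated, ``the only candidate'' should be read with the implicit bound $7^a\le 2520$; you are not solving the equation for all $a$). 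You also make explicit, via the homomorphism to $S_3$ and $\gcd(\lvert\sigma\rvert,6)=1$, why $\sigma$ lies in the stabilizer of no imaginary triangle other than $\Delta_\sigma$; the paper leaves this point as an immediate consequence of Theorem \ref{thm:imaginary triangle}. (One immaterial slip: the subgroup of order $720$ is $A_6.2$, which need not be $S_6$, but only its order matters here.)
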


\begin{proof}
We note that $(q^2-q+1)/\gcd(3,q+1)$ divides neither $q^3(q^2-1)/\gcd(3,q+1)$, the order of the stabilizers of the absolute points of $PG(2,q^2)$, nor $q(q+1)^2(q-1)/\gcd(3,q+1)$, the order of the stabilizers of the nonabsolute points, so $\sigma$ fixes no points of $PG(2,q^2)$ and is therefore of type 3. By Theorem \ref{thm:imaginary triangle}, $\sigma$ is contained in the stabilizer $S$ of a unique imaginary triangle $\Delta_\sigma$. It is not immediately obvious that $S$ is a maximal subgroup of $U_3(q)$ when $q \geq 7$ considering the fact that the stabilizer of an imaginary triangle is not maximal in $U_3(5)$. However we will show both that $S$ is maximal and that $S$ is the unique maximal subgroup of $U_3(q)$ containing $\sigma$ when $q \geq 7$ by showing that $S$ is the only group from the lists of subgroups in Theorems \ref{thm:Mitchell} and \ref{thm:Hartley} which contains $\sigma$. Thus far we have already established that $\sigma$ is not contained in the stabilizer of any point, nor in the stabilizer of any imaginary triangle other than $\Delta_\sigma$. To see that $\sigma$ is not contained in the stabilizer of a triangle (in $PG(2,q^2)$) or the stabilizer of a conic, observe that $(q^2-q+1)/\gcd(3,q+1)$ divides neither $6(q+1)^2/\gcd(3,q+1)$ nor $q(q^2-1)$, the order of the stabilizer of a triangle or conic respectively. Nor does $(q^2-q+1)/\gcd(3,q+1)$ divide $3q_0^3(q_0^3+1)(q_0^2-1)$ where $q = q_0^k$, $k$ is odd and $k \geq 3$, and consequently $\sigma$ is not contained in a maximal subgroup of $U_3(q)$ isomorphic to $U_3(q_0)$ or $PGU_3(q_0)$. This completes the proof if $q$ is even.

For the odd case, we must show that $\sigma$ is not contained in any of the subgroups from items 8-12 of the list in Theorem \ref{thm:Mitchell}. We do this by showing that $\lvert \sigma \rvert$ does not divide any of the orders of these subgroups, which are 36, 72, 168, 216, 360, 720, and 2520. If $q \geq 88$, then $(q^2-q+1)/\gcd(3,q+1) > 2520$, so we need only consider the case that $7 \leq q < 88$. In this case, $q \neq 5^k$ with $k$ odd, so we need not consider the subgroups isomorphic to $A_6.2$ or $A_7$. If $q \geq 34$, then $(q^2-q+1)/\gcd(3,q+1) >360$, and the theorem holds in this case also. It is easily checked that $(q^2-q+1)/\gcd(3,q+1)$ does not divide 36, 72, 168, 216, or 360 in any of the remaining cases, i.e. when $q \in \{7, 9, 11, 13, 17, 19, 23, 27, 29, 31\}$, which completes the proof.
\end{proof}

Finally, we note that the stabilizers of the imaginary triangles fixed by the elements of type 3 in $U_3(q)$ are the normalizers of Sylow subgroups of $U_3(q)$ and are therefore all conjugate.

\begin{lemma} \label{lem:sylow normalizer}
Let $q > 2$ and $\tau \in U_3(q)$ be an element that does not fix any points of $PG(2,q^2)$. Then the stabilizer of $\Delta_\tau$ in $U_3(q)$ is the normalizer of a Sylow $r$-subgroup of $U_3(q)$, where $r$ is the largest prime divisor of $q^2-q+1$.
\end{lemma}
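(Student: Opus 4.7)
The plan is to let $S$ denote the stabilizer of $\Delta_\tau$ in $U_3(q)$, take $P$ to be the Sylow $r$-subgroup of the cyclic group $\langle \sigma \rangle$ supplied by Theorem \ref{thm:imaginary triangle}, and prove $S = N_{U_3(q)}(P)$. First I would verify that $P$ is in fact a Sylow $r$-subgroup of all of $U_3(q)$: setting $d = \gcd(3, q+1)$ and $m = (q^2-q+1)/d$, we have $|U_3(q)| = q^3(q-1)(q+1)^2\,m$ and $|S| = 3m$ from Theorems \ref{thm:Mitchell} and \ref{thm:Hartley}, while Lemma \ref{lem:number theory} gives $r \geq 5$ and $\gcd(r, q^3(q-1)(q+1)^2) = 1$. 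Since $r \geq 5 > d$ also forces $r \nmid d$, the $r$-part of $m$ coincides with the $r$-parts of $|S|$ and of $|U_3(q)|$, so $P$ is simultaneously Sylow in $\langle \sigma \rangle$, in $S$, and in $U_3(q)$.

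Next I would show $S \leq N_{U_3(q)}(P)$ by proving $P \trianglelefteq S$. Because $\langle \sigma \rangle$ is abelian and contains $P$, we have $\langle \sigma \rangle \leq C_S(P) \leq N_S(P)$, so $|N_S(P)| \in \{m, 3m\}$. If $|N_S(P)| = m$, then $S$ contains exactly $|S|/|N_S(P)| = 3$ Sylow $r$-subgroups; but Sylow's theorem requires this number to be $\equiv 1 \pmod{r}$, which is impossible for $r \geq 5$. Thus $N_S(P) = S$, so $P \trianglelefteq S$ and $S \leq N_{U_3(q)}(P)$.

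The reverse inclusion $N_{U_3(q)}(P) \leq S$ is where the main obstacle lies, since it requires translating the algebraic condition of normalizing $P$ into the geometric condition of stabilizing $\Delta_\tau$. For any nontrivial $\pi \in P$, the order $|\pi|$ is a power of $r \geq 5$, and by Lemma \ref{lem:number theory} is coprime to the orders of the stabilizers of both absolute and nonabsolute points of $PG(2, q^2)$; hence $\pi$ is of type 3, and by Theorem \ref{thm:imaginary triangle} its extension to $PG(2, q^6)$ fixes exactly three points, forming a triangle $\Delta_\pi$. Since $\pi \in \langle \sigma \rangle$ fixes each vertex of $\Delta_\tau$, we must have $\Delta_\pi = \Delta_\tau$, and consequently the common fixed point set of $P$ in $PG(2, q^6)$ is precisely the vertex set of $\Delta_\tau$. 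Any $g \in N_{U_3(q)}(P)$ sends this common fixed point set to itself (because $g P g^{-1} = P$), so $g$ permutes the vertices of $\Delta_\tau$, hence $g \in S$. Combining the two inclusions yields $S = N_{U_3(q)}(P)$, as required.
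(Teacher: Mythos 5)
Your proof is correct and follows essentially the same route as the paper: identify the Sylow $r$-subgroup $P$ inside $\langle\sigma\rangle$, show it is normal in the triangle stabilizer $S$, and deduce $N_{U_3(q)}(P) \leq S$ from the fact that the nontrivial elements of $P$ fix exactly the vertices of $\Delta_\tau$ in $PG(2,q^6)$. The only cosmetic difference is that you establish $P \trianglelefteq S$ by counting Sylow $r$-subgroups, whereas the paper observes that $\langle\sigma\rangle$ has index $3$ (the smallest prime divisor of $\lvert S\rvert$) and hence is normal in $S$, so its characteristic subgroup $P$ is normal in $S$ as well.
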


\begin{proof}
Let $S$ be the stabilizer of $\Delta_\tau$ in $U_3(q)$. Note that $S$ has order $3(q^2-q+1)/\gcd(3,q+1)$ by Theorem \ref{thm:Mitchell} if $q$ is odd or by Theorem \ref{thm:Hartley} is $q$ is even, and by Theorem \ref{thm:imaginary triangle}, there is $\sigma \in S$ of order $(q^2-q+1)/ \gcd(3,q+1)$. The index of $\langle \sigma \rangle$ in $S$ is 3, which is the minimal prime divisor of the order of $S$, so $\langle \sigma \rangle$ is normal in $S$. Now, $\langle \sigma \rangle$ contains a unique Sylow $r$-subgroup $R$ of $U_3(q)$ which is a characteristic subgroup of $\langle \sigma\rangle$. Since $\langle \sigma \rangle$ is normal in $S$, it follows that $R$ is also normal in $S$.\\

On the other hand, suppose that $\nu \in U_3(q)$ normalizes $R$. Notice that $R$ is cyclic, and let $\rho$ be a generator for $R$. Then for some integer $k$ and each vertex $P$ of $\Delta_\tau$, $\rho \nu (P) = \nu \rho^k(P) = \nu(P)$. So $\rho$ fixes each of the points $\nu(P)$, where  $P \in \Delta_\tau$, but the only points of $PG(2, q^6)$ fixed by $\rho$ are the vertices of $\Delta_\tau$, so it follows that $\nu$ permutes these and thus $\nu \in S$.
\end{proof}

It follows from the previous lemma that the stabilizers of the imaginary triangles fixed by the elements of type 3 in $U_3(q)$ are self-normalizing and form a single conjugacy class of size $q^3(q+1)^2(q-1)/3$.

\subsection{Bounds on the covering number}
We are nearly ready to establish the upper and lower bounds on $\sigma(U_3(q))$ given in Theorem \ref{thm:mainresult}. We will need to make use of some known results on polarity graphs, as defined below.

\begin{definition}
Let $\Pi$ be a projective plane and $\perp$ be a polarity on $\Pi$. The \emph{polarity graph} associated to the pair $(\Pi, \perp)$, is a graph $\mathcal{G}$ whose vertices are the points of $\Pi$, where there is an edge from $P$ to $Q$ if and only if $P \in Q^\perp$.
\end{definition}

We note that the triangles in the polarity graph $\mathcal{G}$ correspond to the self-polar triangles in $\Pi$. We will need the following result, which is a combination of Theorems 1.3, 4.7, and Remark 4.6 of \cite{Mattheus2018}.

\begin{theorem} \label{thm:triangle free set}
Let $\perp$ be a unitary polarity on $PG(2,q^2)$, and let $\mathcal{G}$ be the corresponding polarity graph. Then there exists a set $S$ of $m(q)$ nonabsolute points of $PG(2,q^2)$ such that the subgraph of $\mathcal{G}$ induced by $S$ is triangle-free, where
\[ m(q)  = \begin{cases} 
      q^4/2, & \text{if $q$ is a power of 2} \\
      q^3+2q^2-2q-1, & \text{if $q$ is a power of 3} \\
      kq^4/p, & \text{if $q$ is a power of a prime $p$ such that $p = 3k \pm 1 \geq 5$}. 
   \end{cases}
\]
\end{theorem}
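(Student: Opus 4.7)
The plan is to exhibit, for each of the three characteristic regimes, an explicit set $S$ of non-absolute points in $PG(2,q^2)$ of cardinality $m(q)$ such that no three points of $S$ form a self-polar triangle. After fixing an orthonormal basis so that $f(x,y) = \sum x_i \overline{y_i}$, a triangle in $\mathcal{G}$ corresponds to three non-absolute points $[u], [v], [w]$ whose representatives form an orthogonal basis of $\mathbb{F}_{q^2}^3$; the goal reduces to finding a large family of non-absolute points no three of which admit such representatives. The central algebraic object is the Gram identity: if $M$ has columns $u,v,w$ realizing such a decomposition, then $M^{*}M$ is diagonal with entries $f(u,u), f(v,v), f(w,w)$, and so $f(u,u)\,f(v,v)\,f(w,w) = (\det M)^{q+1}$, which is a norm from $\mathbb{F}_{q^2}^\times$ to $\mathbb{F}_q^\times$.

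The construction is then case-by-case. For odd $p \geq 5$ I would use a multiplicative character $\chi$ of order $3$ on $\mathbb{F}_{q^2}^\times$ (available because $3\mid q^2-1$ whenever $p\neq 3$) combined with a suitable linear functional, so that $\chi$ evaluated on an appropriate representative-invariant quantity partitions the non-absolute points into three roughly equal classes of size $\approx q^4/3$. The Gram identity forces the three $\chi$-values at the vertices of any orthogonal triple to multiply to a fixed value, so at least one coset is triangle-free; counting that coset exactly gives $k q^4/p$. For $p = 2$, cubic characters degenerate, but the absolute trace $\mathrm{Tr}_{\mathbb{F}_q/\mathbb{F}_2}$ plays the analogous role: a scaling-invariant trace condition splits the non-absolute points into two halves and an $\mathbb{F}_2$-linear analogue of the Gram identity rules out triangles in the larger half, yielding $q^4/2$. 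For $p = 3$ both tricks fail because Frobenius is the cube map and the Gram identity becomes trivial modulo cubes, so the construction is necessarily smaller and explicit: one takes a structured union such as the non-absolute points of a non-absolute line together with additional orbits under the stabilizer of a point, conic, or Baer-like subconfiguration, producing $q^3 + 2q^2 - 2q - 1$ points and verifying triangle-freeness directly by analyzing incidences.

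The principal obstacle is the characteristic $3$ case: absent a nontrivial character of order $3$ or an additive invariant sensitive to orthogonality, there is no clean algebraic obstruction preventing three mutually orthogonal vectors from satisfying any given condition on $f(x,x)$, so one must design $S$ geometrically triangle-by-triangle and accept a substantially smaller count. In the other two regimes the substantive work is essentially bookkeeping — verifying that the chosen invariant is well defined on projective points, i.e., stable under $x \mapsto \lambda x$ (which multiplies $f(x,x)$ by $\lambda^{q+1}$), and then counting the resulting level set precisely using the surjectivity of the norm (respectively trace) map.
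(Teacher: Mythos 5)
This theorem is not proved in the paper at all: the sentence preceding it states that it is a combination of Theorems 1.3, 4.7 and Remark 4.6 of Mattheus--Pavese \cite{Mattheus2018}, so the only ``proof'' the paper supplies is a citation. Your proposal attempts a genuine construction, which is more ambitious, but the construction you outline fails at its core.

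The fatal problem is that $f(u,u)$ carries no projective information. Rescaling $u\mapsto\lambda u$ multiplies $f(u,u)$ by $\lambda^{q+1}$, and the norm map $\lambda\mapsto\lambda^{q+1}$ from $\mathbb{F}_{q^2}^{*}$ onto $\mathbb{F}_q^{*}$ is surjective, so every nonabsolute point has a representative with $f(u,u)=1$. Your Gram identity $f(u,u)f(v,v)f(w,w)=(\det M)^{q+1}$ is therefore vacuous as an obstruction: both sides can be normalized to $1$ for any orthogonal triple. A cubic character $\chi$ applied to $f(u,u)$ is consequently either ill-defined on points (when $3\nmid q+1$, since $\chi$ is then nontrivial on the norms $\lambda^{q+1}$) or useless (when $3\mid q+1$, since then $\mathbb{F}_q^{*}\subseteq\ker\chi$, so $\chi(f(u,u))$ and $\chi\bigl((\det M)^{q+1}\bigr)$ are identically $1$). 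Even granting a well-defined cube-root-of-unity invariant with ``product over a triangle equals $1$,'' no class is excluded: each constant class $\{\chi=\omega^{j}\}$ satisfies $\omega^{3j}=1$ automatically. This is exactly the degeneracy you correctly diagnose for characteristic $3$ in the additive setting, and it afflicts a multiplicative order-$3$ invariant in every characteristic. The shape of $m(q)$ is itself the tell: $q^4/2$ and $kq^4/p$ carry the characteristic $p$ in the denominator (a character-coset count would give roughly $(q^4-q^3+q^2)/3$ instead), and $k=\lfloor(p+1)/3\rfloor$ is precisely the maximum size of a subset $A\subseteq\mathbb{F}_p$ with no solution of $a+b+c=0$ (repetitions allowed; Cauchy--Davenport shows $k$ is optimal, and the interval $p/3<t<2p/3$ attains it). The mechanism actually used in \cite{Mattheus2018} is of this additive type: an $\mathbb{F}_p$-valued invariant defined on an affine chart relative to a tangent line of the Hermitian curve, whose values at the vertices of a self-polar triangle satisfy an $\mathbb{F}_p$-linear relation, with $S$ the preimage of a maximal relation-free $A$ and each fiber of size $q^4/p$. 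That framework also explains cleanly why $p=3$ collapses ($3t=0$ identically, so no nonempty $A$ works and a much smaller ad hoc construction is required) and why $p=2$ gives exactly half. As written, none of the three cases of your argument goes through; it would need to be rebuilt around such an additive invariant.
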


With this resut on triangle-free subsets in unitary polarity graphs, we can now prove the upper and lower bounds on $\sigma(U_3(q))$ for $q \geq 7$ given in Theorem \ref{thm:mainresult}:

Let $S$ be a set as in the previous theorem, and let $S'$ be the set of all nonabsolute points of $PG(2, q^2)$ which are not in $S$. Note that $\lvert S' \rvert = q^4-q^3 +q^2 -m(q)$. Since the subgraph of $\mathcal{G}$ induced by $S$ is triangle-free, every self-polar triangle in $PG(2,q^2)$ has at least one vertex in $S'$. By Lemma \ref{thm:must fix a self polar triangle}, every element of $U_3(q)$ which fixes a nonabsolute point of $PG(2,q^2)$ but not an absolute point is contained in the stabilizer in $U_3(q)$ of at least one point of $S'$. Consequently the set $\mathcal{C}$ consisting of
\begin{enumerate}
\item the $q^3+1$ stabilizers of the absolute points of $PG(2,q^2)$, 
\item the $q^4-q^3 +q^2 -m(q)$ stabilizers of the nonabsolute points from $S'$, and
\item the stabilizers of the $q^3(q+1)^2(q-1)/3$ imaginary triangles fixed by the elements of type 3 in $U_3(q)$
\end{enumerate}
is a cover of $U_3(q)$ with $\lvert \mathcal{C} \rvert = q^4+q^2+1 - m(q) + q^3(q+1)^2(q-1)/3$. Note that for $q \geq 7$, $m(q) \geq q^3+2q^2-2q-1$, so $\lvert \mathcal{C} \rvert \leq q^4-q^3-q^2+2q+2 +q^3(q+1)^2(q-1)/3$. This establishes the upper bound given in Theorem \ref{thm:mainresult}.

To justify the lower bound, let $\mathcal{C}$ be a minimal cover for $U_3(q)$. We may without loss of generality assume that $\mathcal{C}$ consists of maximal subgroups of $U_3(q)$. Note that by Theorem \ref{thm:unique maximal subgroup} and the comments following the proof of Lemma \ref{lem:sylow normalizer}, $\mathcal{C}$ must contain the stabilizers of all $q^3(q+1)^2(q-1)/3$ imaginary triangles fixed by the elements of $U_3(q)$ that fix no point of $PG(2,q^2)$. Since these subgroups do not contain, for example, any of the involutions in $U_3(q)$, we must have $\lvert \mathcal{C} \rvert >q^3(q+1)^2(q-1)/3$.

For the remainder of the proof we will assume that the prime divisor $p$ of $q$ is not equal to 3. Let $\Omega$ be the set of nonidentity elements contained in the union of all of the Sylow $p$-subgroups of $U_3(q)$. We will partition the maximal subgroups of $U_3(q)$ into sets $X_i$. Specifically, let $X_1$ be the set of stabilizers in $U_3(q)$ of the absolute points of $PG(2,q^2)$, $X_2$ be the set of stabilizers of the nonabsolute points, $X_3$ be the set consisting of the stabilizers of the triangles in $PG(2,q^2)$, $X_4$ be the set of the stabilizers of the imaginary triangles fixed by the elements of type 3 in $U_3(q)$, and $X_5$ be the set of subgroups of $U_3(q)$ isomorphic to $U_3(q_0)$ or $PGU_3(q_0)$ where $q = q_0^k$ with $k$ odd and $k \geq 3$. If $p$ is odd let $X_6$ be the set of stabilizers of conics in $U_3(q)$ and $X_7$ be the set of maximal subgroups of $U_3(q)$ which are isomorphic to any of $L_3(2)$, $A_6$, $A_6.2$, $A_7$, or the Hessian groups of order 216, 72, or 36. If $p=2$, set $X_6 = X_7 = \emptyset$. Finally, for $1 \leq i \leq 7$ let $x_i = \lvert X_i \cap \mathcal{C}\rvert$ and let $\displaystyle a_i = \max_{H \in X_i} \lvert H \cap \Omega \rvert$ if $X_i \neq \emptyset$, and $a_i = 0$ otherwise. By Theorems \ref{thm:Mitchell} and \ref{thm:Hartley}, $\displaystyle \bigcup_{i=1}^7 X_i$ contains all of the maximal subgroups of $U_3(q)$.

Now, since the stabilizer of each absolute point contains a unique Sylow $p$-subgroup, every subgroup in $X_1$ has exactly $q^3-1$ elements of $\Omega$. The stabilizer of a nonabsolute point $P$ has exactly $q^2-1$ elements of $\Omega$, namely the (nonidentity) elations in $U_3(q)$ whose centers are on $P^\perp$. Consequently $a_1 = q^3-1$ and $a_2 = q^2-1$. The stabilizer of a triangle contains no elements of $\Omega$ if $p \geq 5$, in which case $a_3 = 0$. If $p=2$, $0 \leq a_3 < 6(q+1)^2/\gcd(3,q+1)$ and $q \geq 8$. In both cases $a_3 <q^3-1$. Since we are assuming that $p \neq 3$, the stabilizer of an imaginary triangle contains no elements of $\Omega$ and $a_4 = 0$. A subgroup of $U_3(q)$ isomorphic to $U_3(q_0)$ or $PGU_3(q_0)$ has $q_0^6-1$ elements of $\Omega$, but if $q = q_0^k$ with $k \geq 3$, then $q_0^6-1 < q^3-1$, and we conclude that $a_5 <q^3-1$. If $p=2$ then $a_6 = a_7 = 0$. So suppose that $p\geq 5$. The stabilizer of a conic has order $q(q^2-1)$, so $a_6 \leq (q-1)(q^2-1) <q^3-1$. Now since $q\geq 7$, $q^3-1 \geq 342$, and the only subgroup in $X_7$ with more than 342 elements of order $p$ is $A_7$ in the case that $p=5$ which contains 504 elements of order 5. But if $p=5$ and $q \geq 7$, then in fact $q \geq 25$. So $504 < 15624 \leq q^3-1$ in this case. Consequently, we conclude that in all cases, $a_i <q^3-1 = a_1$ for $2 \leq i \leq 7$.

Since $\mathcal{C}$ is contained in $\displaystyle \bigcup_{i=1}^7 X_i$, $\mathcal{C}$ covers the $q^6-1$ elements of $\Omega$, and $a_4=0$, we must have
\[\sum_{\substack{i=1\\ i\neq 4}}^7a_i x_i \geq q^6-1\,.\]
Furthermore, since $a_i < q^3-1 = a_1$ for $2 \leq i \leq 7$, it follows that
\[(q^3-1)\sum_{\substack{i=1\\ i\neq 4}}^7 x_i \geq \sum_{\substack{i=1\\ i\neq 4}}^7a_i x_i \geq q^6-1\,.\]
Therefore $\displaystyle \sum_{\substack{i=1\\ i\neq 4}}^7 x_i \geq q^3+1$, and since $x_4 = q^3(q+1)^2(q-1)/3$ as noted earlier in the proof, we conclude that
\[\sigma(G) = \lvert \mathcal{C} \rvert = \sum_{i=1}^7 x_i \geq q^3+1+ q^3(q+1)^2(q-1)/3\,.\]

\section{Proof of Theorem \ref{thm:mainresult2}}
We conclude with a proof of Theorem \ref{thm:mainresult2}, which states that $\sigma(SU_n(q)) =\sigma(U_n(q))$. We will make use of the following lemmas:

\begin{lemma} \label{lem:equality of covering numbers of group and quotient}
Let $G$ be a finite noncyclic group and $N$ be a normal subgroup of $G$. Then $\sigma(G) = \sigma(G/N)$ if and only if there is a minimal cover of $G$ which consists of subgroups containing $N$. In particular, if $N$ is contained in every maximal subgroup of $G$, then $\sigma(G) = \sigma(G/N)$.
\end{lemma}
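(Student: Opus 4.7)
The plan is to use the subgroup correspondence for $N \trianglelefteq G$: proper subgroups of $G/N$ are in inclusion-preserving bijection with proper subgroups of $G$ that contain $N$, via $H \mapsto H/N$ with inverse $K \mapsto \pi^{-1}(K)$, where $\pi : G \to G/N$ is the natural projection. Under this correspondence, $\{H_i/N\}$ covers $G/N$ if and only if $\{H_i\}$ covers $G$, and both collections have the same cardinality. Noting that $G/N$ is noncyclic whenever $G$ admits a cover by subgroups containing $N$ (since the images are proper), all covering numbers in sight are finite.

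First I would establish the inequality $\sigma(G) \leq \sigma(G/N)$ in general. Given any cover $\{K_1, \dots, K_m\}$ of $G/N$ by proper subgroups with $m = \sigma(G/N)$, the subgroups $\pi^{-1}(K_i)$ are proper in $G$, contain $N$, and cover $G$. So a cover of $G$ of size $\sigma(G/N)$ by subgroups containing $N$ always exists. This immediately gives the "in particular" clause: if $N$ is contained in every maximal subgroup of $G$, then because the covering number of a finite group is always realised by a cover consisting of maximal subgroups, any minimal cover of $G$ automatically consists of subgroups containing $N$; applying the main equivalence (proven next) then yields $\sigma(G) = \sigma(G/N)$.

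Next I would prove the biconditional. For the forward direction, suppose $\sigma(G) = \sigma(G/N)$. Take a minimal cover of $G/N$ and lift it as above to a cover of $G$ by $\sigma(G/N) = \sigma(G)$ proper subgroups, each containing $N$; this is a minimal cover of $G$ with the desired property. For the reverse direction, suppose $\mathcal{C} = \{H_1, \dots, H_n\}$ is a minimal cover of $G$ with $N \leq H_i$ for every $i$, so $n = \sigma(G)$. Then $\{H_1/N, \dots, H_n/N\}$ is a collection of proper subgroups of $G/N$ whose union equals $\pi(G) = G/N$, so $\sigma(G/N) \leq n = \sigma(G)$. Combined with the general inequality $\sigma(G) \leq \sigma(G/N)$ from the previous paragraph, this yields equality.

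There is essentially no obstacle here; the whole argument is a formal manipulation of the correspondence theorem together with the standing fact that covering numbers are attained by covers of maximal subgroups. The only point worth checking carefully is that every $H_i/N$ is a \emph{proper} subgroup of $G/N$, which is immediate since $H_i$ is proper in $G$ and contains $N$, so $H_i/N \neq G/N$.
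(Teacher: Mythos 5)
Your proposal is correct and follows essentially the same route as the paper: both rest on the general inequality $\sigma(G)\leq\sigma(G/N)$ obtained by lifting covers through the quotient map, push a minimal cover of $G$ forward to get the reverse inequality, and deduce the ``in particular'' clause from the fact that minimal covers can be taken to consist of maximal subgroups. The only cosmetic difference is that the paper proves the ``only if'' direction by contraposition (treating the case of cyclic $G/N$ separately), whereas you prove it directly after observing that $\sigma(G)=\sigma(G/N)<\infty$ forces $G/N$ to be noncyclic.
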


\begin{proof}
Note that the inequality $\sigma(G) \leq \sigma(G/N)$ holds for any normal subgroup $N$ of $G$ (this is Lemma 2 of \cite{Cohn1994}), and let $\eta$ be the canonical map from $G$ onto $G/N$. First suppose that there exists a minimal cover $\mathcal{C}$ of $G$ consisting of subgroups which contain $N$, and let $\mathcal{C}'=\{ \eta(H) : H \in \mathcal{C}\}$. Then $\mathcal{C}'$ is a cover of $G/N$ with $\lvert \mathcal{C}' \rvert = \lvert \mathcal{C} \rvert = \sigma(G)$. So $\sigma(G/N) \leq \lvert \mathcal{C}' \rvert =\sigma(G)$, and we conclude that $\sigma(G) = \sigma(G/N)$.

Now suppose that there does not exist a minimal cover of $G$ consisting only of subgroups which contain $N$. If $G/N$ is cyclic, then by convention $\sigma(G/N) = \infty$, and so it follows that $\sigma(G) < \sigma(G/N)$. If not, let $\mathcal{C}$ be a minimal cover of $G/N$, and let $\mathcal{C}' = \{\eta^{-1}(H) : H \in \mathcal{C}\}$. Then $\mathcal{C}'$ is a cover of $G$ and $\lvert \mathcal{C}' \rvert = \lvert \mathcal{C} \rvert =\sigma(G/N)$. But since every member of $\mathcal{C}'$ contains $N$, $\mathcal{C}'$ cannot be a minimal cover for $G$ and so $\sigma(G) < \sigma(G/N)$.

Finally, if $N$ is contained in every maximal subgroup of $G$, choose any minimal cover of $G$ consisting of maximal subgroups. Then every member of $\mathcal{C}$ contains $N$, so $\sigma(G) = \sigma(G/N)$ by the first part of the theorem.
\end{proof}

\begin{lemma} \label{lem: center or commutator}
Let $H$ be a maximal subgroup of a group $G$. Then $H$ contains the center or commutator subgroup of $G$.
\end{lemma}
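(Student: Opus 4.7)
The plan is a standard dichotomy argument: assume that $H$ does \emph{not} contain $Z(G)$ and deduce that it must contain $[G,G]$. So suppose there exists $z \in Z(G) \setminus H$. First I would form the subset $HZ(G)$; because every element of $Z(G)$ commutes with every element of $H$, this is in fact a subgroup of $G$ (the product of two commuting subgroups). Since $z \in HZ(G) \setminus H$, we have $H \lneq HZ(G) \leq G$, and maximality of $H$ forces $HZ(G) = G$.

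Next I would exploit this factorization to show $[G,G] \leq H$. Any $g_1, g_2 \in G$ can be written as $g_i = h_i z_i$ with $h_i \in H$ and $z_i \in Z(G)$. Computing the commutator and using that the $z_i$ are central,
\[
[g_1, g_2] = (h_1 z_1)^{-1}(h_2 z_2)^{-1}(h_1 z_1)(h_2 z_2) = h_1^{-1} h_2^{-1} h_1 h_2 = [h_1, h_2] \in H,
\]
so every generator of $[G,G]$ lies in $H$, and hence $[G,G] \leq H$. This completes the dichotomy.

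There is essentially no main obstacle; the only point that deserves explicit mention is the verification that $HZ(G)$ is a subgroup (which is where centrality of $Z(G)$ is used, so that the elements of $Z(G)$ normalize $H$) and the cancellation of the central factors inside the commutator. Both are one-line observations, so the proof is short and the lemma follows immediately.
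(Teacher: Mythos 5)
Your proof is correct and follows essentially the same route as the paper: maximality forces $HZ(G)=G$, and then the commutator subgroup is seen to lie in $H$. The only cosmetic difference is that the paper deduces $[G,G]\leq H$ by noting $H\trianglelefteq G$ and $G/H\cong Z/(H\cap Z)$ is abelian, whereas you compute the commutators directly; both are immediate.
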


\begin{proof}
If $H$ does not contain the center $Z$ of $G$, then $H \trianglelefteq HZ = G$, and $G/H = HZ/H \cong Z/(H \cap Z)$ is abelian, from which it follows that $H$ contains the commutator subgroup of $G$.
\end{proof}

\begin{lemma} \label{lem:perfect group}
Let $G$ be a finite nontrivial perfect group and $Z$ be the center of $G$. Then $\sigma(G) = \sigma(G/Z)$.
\end{lemma}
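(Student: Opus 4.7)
The plan is to deduce this lemma as an immediate corollary of the two preceding lemmas, so the argument will be very short. The key observation is that in a perfect group, every proper subgroup is forced to contain the center.

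First I would observe that $G$ is noncyclic: any nontrivial cyclic group is abelian, hence has trivial commutator subgroup, so cannot be perfect. Similarly $G/Z$ is noncyclic, since otherwise $G$ would be abelian and therefore not nontrivial perfect. Hence $\sigma(G)$ and $\sigma(G/Z)$ are both well-defined finite numbers (and in particular $\sigma(G/Z) \neq \infty$).

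Next, let $H$ be any maximal subgroup of $G$. By Lemma \ref{lem: center or commutator}, $H$ contains $Z(G)$ or $G'$. But $G$ is perfect, so $G' = G$, and since $H$ is proper it cannot contain $G' = G$. Therefore $H \supseteq Z$. Thus $Z$ is contained in every maximal subgroup of $G$, and applying the second conclusion of Lemma \ref{lem:equality of covering numbers of group and quotient} with $N = Z$ yields $\sigma(G) = \sigma(G/Z)$.

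There is no real obstacle here; the whole content of the lemma is the observation that maximal subgroups of a perfect group contain the center, after which the conclusion is delivered by Lemma \ref{lem:equality of covering numbers of group and quotient}. The only minor care needed is to verify that both $G$ and $G/Z$ are noncyclic so the covering numbers are meaningful, which follows immediately from $G$ being perfect and nontrivial.
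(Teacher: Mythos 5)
Your proof is correct and follows essentially the same route as the paper: use Lemma \ref{lem: center or commutator} plus perfectness to conclude every maximal subgroup contains $Z$, then invoke Lemma \ref{lem:equality of covering numbers of group and quotient}. The extra check that $G$ and $G/Z$ are noncyclic is a reasonable bit of added care that the paper leaves implicit.
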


\begin{proof}
Since $G$ is perfect, no maximal subgroup contains the commutator subgroup of $G$, and therefore by the previous lemma, every maximal subgroup of $G$ contains the center of $G$. The desired result now follows from Lemma \ref{lem:equality of covering numbers of group and quotient}.
\end{proof}

To see that $\sigma(SU_n(q)) = \sigma(U_n(q))$, first note that $SU_2(2) \cong U_2(2) \cong S_3$, so $\sigma(SU_2(2)) = \sigma(U_2(2))  = 4$. In the case $n=2$ and $q=3$, $SU_2(3) \cong SL_2(3)$ and $U_2(3) \cong L_2(3)$, so it follows from the results in \cite{Bryce1999} that $\sigma(SU_2(3)) = \sigma(U_2(3)) = 5$. Note that $U_3(2)$ has the Klein 4-group as a homomorphic image. Therefore $SU_3(2)$ does as well, and so $\sigma(SU_3(2))  =\sigma(U_3(2)) = 3$. So we may assume that $(n,q) \notin \{(2,2),(2,3),(3,2)\}$ in which case $SU_n(q)$ is perfect, so that $\sigma(SU_n(q)) = \sigma(U_n(q))$ by Lemma \ref{lem:perfect group}.\\

We note that Lemma \ref{lem:perfect group} can be applied to other families of (usually) quasisimple groups. For example, applying it to the symplectic groups $Sp_{2m}(q)$ yields $\sigma(Sp_{2m}(q)) = \sigma(S_{2m}(q))$, which holds even in the cases $(m,q) \in \{(1,2),(1,3),(2,2)\}$ where $Sp_{2m}(q)$ is not perfect, since $Sp_2(2) \cong S_2(2)$, $Sp_4(2) \cong S_4(2)$, $Sp_2(3) \cong SL_2(3)$, $S_2(3) \cong L_2(3)$, and $\sigma(SL_2(3)) = \sigma(L_2(3)) = 5$.

\end{document}